\newcommand{\norm}[1]{\|#1\|}
\newcommand{\bmtx}{\begin{bmatrix}}
    \newcommand{\emtx}{\end{bmatrix}}
\newcommand{\bsmtx}{\left[ \begin{smallmatrix}} 
    \newcommand{\esmtx}{\end{smallmatrix} \right]}
\newcommand{\LMIVarXW}{\bmtx x \\ w \emtx}
\newcommand{\sLMIVarXW}{\bsmtx x \\ w \esmtx}
\newcommand{\Ealpha}{\mathcal{E}_\alpha}
\newcommand{\Rn}{\mathbb{R}^{n}}
\newcommand{\Ri}[1]{\mathbb{R}^{#1}}
\newcommand{\commentg}[1]{\textcolor{black}{#1}}
\newtheorem{theorem}{Theorem}
\newtheorem{lemma}{Lemma}
\title{\LARGE \bf  Quadratic Constraints for Local Stability Analysis of Quadratic Systems}
\author{Shih-Chi Liao$^{1}$, Maziar S. Hemati$^{2}$, and Peter Seiler$^{1}$% <-this % stops a space
    % \thanks{*This research was sponsored by ARO and was accomplished under Grant Number W911NF-20-1-0156. The work of MSH was supported in part by the AFOSR grants FA9550-21-1-0106 and FA9550-21-1-0434, NSF grant CBET-1943988, and ONR grant N000140-22-1-2029.}% <-this % stops a space
    \thanks{*This research was sponsored by the US Army Research Office and was accomplished under Grant Number W911NF-20-1-0156. The work of Maziar S. Hemati was supported in part by the Air Force Office of Scientific Research under award numbers FA9550-21-1-0106 and FA9550-21-1-0434, the National Science Foundation under award number CBET-1943988, and the Office of Naval Research under award number N000140-22-1-2029.}
    \thanks{$^{1}$Shih-Chi Liao and Peter Seiler are with the Department of Electrical Engineering and Computer Sciences at the University of Michigan, Ann Arbor, {\tt\small \{shihchil,pseiler\}@umich.edu}}%
    \thanks{$^{2}$Maziar S. Hemati is with the Department of Aerospace Engineering and Mechanics at the University of Minnesota, {\tt\small mhemati@umn.edu}}
}
\begin{document}

\maketitle
\thispagestyle{empty}
\pagestyle{empty}

%%%%%%%%%%%%%%%%%%%%%%%%%%%%%%%%%%%%%%%%%%%%%%%%%%%%%%%%%%%%%%%%%%%%%%%%%%%%%%%%
\begin{abstract}

% quadratic nonlinearity is important to analyze
% QC frame work and previous work
% Valley QC and their extension
% ROA analysis result

% \commentr{To be edited for more specific contribution}

% Quadratic dynamic systems are an important class of nonlinear systems. Dissipation inequality with quadratic constraints (QCs) provides a framework to analyze system properties of nonlinear dynamics.
% %
% This paper presents new QCs based on the properties of sign-indefinite quadratic polynomials. These QCs provide a tight description of quadratic polynomials along the direction that the function equals to zero. This reduces the conservatism of QC proposed in~\cite{kalur2021LCSS}. 
% %
% The application and effectiveness of these new QCs is demonstrated by estimating the region of attraction of two numerical examples. The proposed QCs provide a new set of tools for system analysis with dissipation inequality and quadratic constraints.

% This paper proposes an improved method to analyze the stability of nonlinear systems with quadratic vector fields. The stability and performance of quadratic systems can be analyzed using dissipation inequalities. The quadratic terms can be bounded locally using quadratic constraints (QCs). This paper presents new QCs based on the properties of sign-indefinite quadratic polynomials. These QCs provide a tight bound on the quadratic terms in the dynamics along specific directions. This reduces the conservatism of the QC bounds as compared to previous work. Two numerical examples are provided to demonstrate the effectiveness of these new QCs. 

This paper proposes new quadratic constraints (QCs) to bound a quadratic polynomial. Such QCs can be used in dissipation ineqaulities to analyze the stability and performance of nonlinear systems with quadratic vector fields. The proposed QCs utilize the sign-indefiniteness of certain classes of quadratic polynomials. These new QCs provide a tight bound on the quadratic terms along specific directions. This reduces the conservatism of the QC bounds as compared to the QCs in previous work. Two numerical examples of local stability analysis are provided to demonstrate the effectiveness of the proposed QCs.

\end{abstract}

%%%%%%%%%%%%%%%%%%%%%%%%%%%%%%%%%%%%%%%%%%%%%%%%%%%%%%%%%%%%%%%%%%%%%%%%%%%%%%%%

% \commentg{[TODO]
% \begin{itemize}
%     % \item comment out / delete old content
%     % \item outline
%     % \item abstract
%     \item unify notation
%     % \item mention $Q$ in $x^\top Q x$ is implicitly symmetric
%     % \item $M_i$ for each QC in methods
%     % \item name the QCs
% \end{itemize}}

\section{INTRODUCTION}

% \commentg{
% \begin{itemize}
%     \item why quadratic nonlinearity
%     \begin{itemize}
%         \item Taylor series expansion
%         \item system directly modeled by quadratic system: incompressible flows
%     \end{itemize}
%     % 
%     \item Analysis with dissipation inequality  
%     \begin{itemize}
%         \item general idea of framework 
%         \item the use of QC in the framework
%         \item ROA analysis from literatur
%     \end{itemize}
%     %
%     \item QC
%     \begin{itemize}
%         \item I/O behavior of the system
%         \item quadratic nonlinearities
%     \end{itemize}
%     % 
%     \item our contribution
%     \begin{itemize}
%         \item function landscape of quadratic monomial
%         \item Valley QC 
%         \item other extensions
%     \end{itemize}
% \end{itemize}
% }

%% ===[ old structure ]===
% - why quadratic systems
% - why ROA and general approach
% - why QC approach
% - our contribution
% - organization

Quadratic systems are an important class of nonlinear dynamics. A generic nonlinear system can be approximated by a quadratic system through a Taylor series expansion. This improves the approximation compared to linearization~\cite{Khalil:1173048}. Further, some systems are directly modeled by quadratic dynamics: e.g.,~fluid flows governed by the incompressible Navier-Stokes equations. These dynamics are quadratic, and linear analysis is often insufficient due to significant nonlinear effects~\cite{schmid2000stability}. Furthermore, quadratic systems can model complex nonlinear behavior such as chaos~\cite{lorenz1963deterministic} and limit cycle oscillations~\cite{kuznetsov2013visualization}. Thus, approaches to analyze quadratic systems can benefit scientific and engineering applications.

Dissipation inequalities can be used to analyze many dynamical system properties, such as stability, reachability, and robustness~\cite{arcak2016networks}. The analysis approach generally involves searching for a valid storage function that certifies the dissipativity. The certification can often be posed as a convex optimization problem, such as a semi-definite program (SDP). These convex optimization problems can be solved efficiently, enabling system analysis and control design algorithms~\cite{boyd1994linear}. 

Quadratic constraint (QC) is a modeling framework that abstracts a nonlinearity as a quadratic inequality of the input and output of the functions~\cite{megretski1997system}. QCs allow one to analyze nonlinear systems through dissipation inequalities~\cite{seiler2014stabilityIQC} at the expense of conservatism due to abstraction. For quadratic polynomials, a few local QCs are proposed in the literature in the context of region of attraction (ROA) analysis for fluid systems. QCs were derived in~\cite{kalur2021PRF} and~\cite{Liu2020io-inspired} to bound a quadratic polynomial in a spherical local region. These QCs were further generalized to an ellipsoidal local region in~\cite{kalur2021LCSS}. Recently,~\cite{toso2022regional} proposed QCs to capture the interaction of quadratic polynomials in a hyperrectangle. These works pursued QC-based approaches over prevailing sum-of-squares optimization techniques~\cite{parrilo2000structured} in order to achieve scalable algorithms for large-dimensional systems.

In this paper, we explore the function landscape of quadratic polynomials and proposed new QCs to tighten the description along the direction which the function equals to zero. 
These QCs reduce the conservatism of QCs presented in~\cite{kalur2021LCSS}, and can also generalize the QCs proposed in~\cite{toso2022regional}. 
% These QCs reduce the conservatism of QC in~\cite{kalur2021LCSS}. Furthermore, the developed technique can generalize QCs proposed in~\cite{toso2022regional}. 
%
Finally, the effectiveness of the proposed QCs are investigated with two numerical example with ROA estimation problems.

% [Dissipation inequality]
% Dissipation inequalities can be used as conditions to analyze many dynamical system properties, such as stability, reachability, and robustness. Numerically synthesizing/verifying the condition often involves linear matrix inequality (LMI) and be solved by semi-definite program (SDP). 

\section{Problem Formulation}

% \commentg{
% \begin{itemize}
%     \item Quadratic systems
%     \begin{itemize}
%         \item quadratic dynamics $\dot{x} = Ax + Mz(x)$
%         \item quadratic form of function
%         \item equilibrium points of the system
%     \end{itemize}
%     %
%     \item Linear Fractional Transformation
%     \begin{itemize}
%         \item Lur'e decomposition
%     \end{itemize}
%     %
%     \item Local QCs (I/O view point and local QC example)
%     %
%     \item Dissipation ineqaulity analysis
%     \begin{itemize}
%         \item dissipation inequality and LMI
%         \item Theorem (SDP of some analysis)
%     \end{itemize}
%     % 
%     % \item IQC framework
%     % \begin{itemize}
%     %     \item I/O view point and "local ellipsoidal" QC
%     %     \item SDP and why tighter QC is better
%     %     \item ROA analysis
%     %     \item other analysis (reachability analsis and other kind of nonlinearities)
%     % \end{itemize}
% \end{itemize}
% }

\subsection{Quadratic Nonlinear System}

% ===[ Model of Quadratic System ]===
There are $m=\frac{n^2+n}{2}$ quadratic monomials that can be constructed from $x\in\Rn$. Let $z: \Rn \rightarrow\Ri{m}$ denote the function that constructs the vector of such monomials:
\begin{align}
    z(x) = \bmtx x_1^2 & x_1x_2 & \dots & x_2^2 & x_2x_3 & \dots & x_n^2 \emtx^\top,     \label{eq:QuadMon}
\end{align}

Note that any homogeneous quadratic function $\phi : \Rn\rightarrow\Ri{}$ is a linear combination of quadratic monomials. In other words, if $\phi(x) = x^\top Q x$ for some matrix $Q = Q^\top \in \Ri{n\times n}$ then there exists $b\in\Ri{m}$ such that $\phi(x) = b^\top z(x)$. Note that the matrix $Q$ can be constructed from the Hessian of $\phi$: $Q = \frac{1}{2}\nabla^2\phi$. Both forms for a quadratic function (expressed as $x^\top Q x$ or $b^\top z(x)$) will be used throughout the paper.

Consider a quadratic polynomial system of the form:
\begin{align} 
    \dot{x}(t) = A x(t) + B z(x(t)),    \label{eq:NLsys}
\end{align}
where $A\in\Ri{n\times n}$ and $B\in\Ri{n\times m}$. 
The system can have multiple equilibrium points in general but we focus on $x_e=0$. We assume $A$ is Hurwitz so that $x_e=0$ is locally asymptotically stable. Other equilibrium points can be shifted to the origin via a coordination transformation to get the same form of quadratic system~\eqref{eq:NLsys} as shown in~\cite{amato2006region}.

% ===[ Lure system ]===
The Lur'e decomposition~\cite{Khalil:1173048} poses the system~\eqref{eq:NLsys} as:
\begin{align}
    \begin{split}
        \dot{x}(t) &= Ax(t) + Bw(t) \\
        w(t) &= z(x(t)).
    \end{split}     \label{eq:LureSys}
\end{align}
The Lur'e decomposition separates the linear time-invariant dynamics from the quadratic nonlinearity as shown in Fig.~\ref{fig:Lure}. This decomposition enables one to analyze the quadratic system using dissipation inequality with QCs~\cite{seiler2014stabilityIQC}.

\begin{figure}[h]
    \centering
    \vspace*{0.3cm}
    \begin{picture}(120,70)(20,30)
        \thicklines
        \put(40,25){\framebox(90,40){ {\Large $\dot{x}=Ax+Bw$} }}
        % \put(161,42){{\LARGE $d$}}
        % \put(155,45){\vector(-1,0){40}}  
        % \put(23,42){{\LARGE $e$}}
        % \put(75,45){\vector(-1,0){40}}  
        % I/O for Delta
        \put(70,75){\framebox(30,30){ {\Large $z(\cdot)$} }}
        % lines on the left
        \put(-3,65){ {\LARGE $x$ }}
        \put(15,45){\line(1,0){25}}  
        \put(15,45){\line(0,1){45}}  
        \put(15,90){\vector(1,0){55}}  
        % lines on the right
        \put(158,65){ {\LARGE $w$ }}
        \put(155,90){\line(-1,0){55}}  
        \put(155,45){\line(0,1){45}}  
        \put(155,45){\vector(-1,0){25}}  
    \end{picture}
    \caption{Lur'e decomposition of quadratic system~\eqref{eq:NLsys}}
    \label{fig:Lure}
\end{figure}
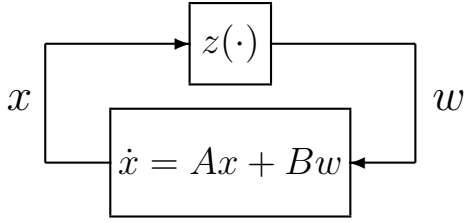

\subsection{Existing Local Quadratic Constraints}

% ===[ QC ]===
The effect of the nonlinearity $z(x)$ can be bounded in a local region $\mathcal{D}\subset\Rn$ using quadratic constraints (QCs). These QCs take the following form:
\begin{align}
    \LMIVarXW^\top M_i \LMIVarXW \geq 0 & & \forall w=z(x), x \in\mathcal{D},     \label{eq:QC}
\end{align}
where $M_i\in\Ri{(n+m) \times (n+m)}$ and $i=1.\dots,k$.
Note that the subscript $i$ is an indexing number, since a nonlinearity can satisfy multiple QCs. Throughout the paper, we consider a local ellipsoidal region of the form $\mathcal{D}=\Ealpha := \{x : x^\top E x \leq \alpha^2\}$, where $E = E^\top \in\Ri{n\times n}$ is a positive definite matrix and $\alpha\in\mathbb{R}$ is a positive scalar.

We summarize two types of QCs that have been developed in the literature. These form the foundation of our new QCs presented in Section~\ref{sec:III_QCs}. The QC in Lemma 1 below is stated in~\cite{kalur2021LCSS} and generalizes results in~\cite{Liu2020io-inspired}.
\begin{lemma}[Cauchy–Schwarz QC]    \label{lemma:CSQC}
    Let the local ellipsoidal region $\Ealpha$ be given. A nonlinearity $\phi(x) = x^\top Q x$ satisfies the following local QC:
    \begin{align}
        \bmtx x \\ \phi(x) \emtx^\top 
        \bmtx \alpha^2(Q E^{-1} Q) & 0 \\ 0 & -1 \emtx 
        \bmtx x \\ \phi(x) \emtx \geq 0\,
        \forall x \in \Ealpha.      \label{eq:CSQC}
    \end{align}
\end{lemma}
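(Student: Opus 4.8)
The plan is to first expand the matrix quadratic form on the left-hand side of~\eqref{eq:CSQC}. Multiplying the block-diagonal matrix against the vector $\bmtx x \\ \phi(x) \emtx$ gives $\alpha^2\, x^\top Q E^{-1} Q x - \phi(x)^2$, so, using $\phi(x) = x^\top Q x$, the claim reduces to establishing the scalar inequality
\[
(x^\top Q x)^2 \leq \alpha^2\, x^\top Q E^{-1} Q x \qquad \forall x \in \Ealpha.
\]

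The name of the lemma signals the key idea: this is precisely a Cauchy--Schwarz estimate once the scalar $x^\top Q x$ is factored as an inner product. Since $E = E^\top$ is positive definite, it admits a symmetric positive-definite square root $E^{1/2}$ with inverse $E^{-1/2}$. I would write $x^\top Q x = (E^{1/2} x)^\top (E^{-1/2} Q x)$ and apply the Cauchy--Schwarz inequality to the two vectors $E^{1/2} x$ and $E^{-1/2} Q x$. This yields
\[
(x^\top Q x)^2 \leq \norm{E^{1/2} x}^2 \, \norm{E^{-1/2} Q x}^2 = (x^\top E x)(x^\top Q E^{-1} Q x).
\]

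The final step is to invoke the membership $x \in \Ealpha$, which by definition gives $x^\top E x \leq \alpha^2$. Substituting this bound into the first factor on the right-hand side completes the argument, since $(x^\top Q x)^2 \leq (x^\top E x)(x^\top Q E^{-1} Q x) \leq \alpha^2\, x^\top Q E^{-1} Q x$, which is exactly the reduced inequality above.

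There is no serious obstacle here; the only point requiring a little care is the factorization that sets up Cauchy--Schwarz. One must choose the split so that the two resulting norms reproduce exactly the quantities $x^\top E x$ and $x^\top Q E^{-1} Q x$ appearing in the QC. Using the symmetric square root of $E$ (rather than, say, a Cholesky factor) makes both factors come out symmetric and keeps the bookkeeping transparent, and positive definiteness of $E$ guarantees that $E^{1/2}$ and $E^{-1/2}$ are well defined.
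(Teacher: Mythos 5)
Your proof is correct and takes exactly the approach the paper intends: the paper states Lemma~\ref{lemma:CSQC} without its own proof (deferring to~\cite{kalur2021LCSS}), and its remark that the QC ``is named as it involves Cauchy--Schwarz in the bounding process'' describes precisely your argument---factor $x^\top Q x = (E^{1/2}x)^\top(E^{-1/2}Qx)$, apply Cauchy--Schwarz to get $(x^\top Qx)^2 \leq (x^\top E x)(x^\top Q E^{-1} Q x)$, then use $x^\top E x \leq \alpha^2$ on $\Ealpha$. Nothing is missing; the one implicit step, that $x^\top Q E^{-1} Q x \geq 0$ so the bound $x^\top E x \leq \alpha^2$ can be multiplied through, is already evident from your squared-norm representation $\norm{E^{-1/2}Qx}^2$.
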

The Cauchy-Schwarz QC (CSQC) is named as it involves Cauchy-Schwarz in the bounding process. This QC~\eqref{eq:CSQC} can be re-written in the form shown in~\eqref{eq:QC}.  Specifically, express the quadratic nonlinearity in the form $\phi(x) = b^\top w$ and substitute $\bsmtx x \\ \phi(x) \esmtx = \bsmtx I & 0 \\ 0 & b^\top \esmtx \sLMIVarXW$ into~\eqref{eq:CSQC}. Lemma~\ref{lemma:CSQC} provides a constraint for an arbitrary quadratic function on an ellipsoid. The next lemma provides a bound for products of quadratic functions with special structure. It was originally proposed in Section IV-A of~\cite{toso2022regional}. 
\begin{lemma}       \label{lemma:TosoQC}
    Let the local region $\Ealpha$ and two quadratic function $\phi_1(x)$ and $\phi_2(x)$ be given. If $\phi_1(x)\phi_2(x) = x_i^2x_jx_k$ with $j\neq k$, then the QC holds:
    % \small
    \begin{align}
        \bsmtx x \\ \phi_1(x) \\ \phi_2(x) \esmtx^\top
        \bsmtx \alpha^2(E^{-1})_{ii}cc^\top & 0 & 0 \\ 0 & 0 & -1 \\ 0 & -1 & 0 \esmtx 
        \bsmtx x \\ \phi_1(x) \\ \phi_2(x) \esmtx  \geq 0\,  
        \forall x \in \Ealpha,       \label{eq:TosoQC}
    \end{align}
    % \normalsize
    where $(E^{-1})_{ii}$ is the $(i,i)$ entry of $E^{-1}$, $c = e_j + e_k$, and $e_j,e_k\in\Rn$ are standard basis vectors. 
\end{lemma}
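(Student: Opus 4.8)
The plan is to reduce the matrix inequality~\eqref{eq:TosoQC} to an elementary scalar inequality and then bound $x_i^2$ over the ellipsoid by a Cauchy--Schwarz argument. First I would expand the quadratic form on the left-hand side using its block structure. Because the off-diagonal blocks coupling $x$ with $(\phi_1,\phi_2)$ are zero, the form splits into two pieces: the top-left block contributes $\alpha^2(E^{-1})_{ii}(c^\top x)^2$, and since $c=e_j+e_k$ we have $c^\top x = x_j+x_k$; the lower $2\times 2$ block $\bsmtx 0 & -1 \\ -1 & 0\esmtx$ acting on $[\phi_1(x),\phi_2(x)]^\top$ contributes $-2\phi_1(x)\phi_2(x)$. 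Using the hypothesis $\phi_1(x)\phi_2(x)=x_i^2 x_j x_k$, the entire quadratic form equals $\alpha^2(E^{-1})_{ii}(x_j+x_k)^2 - 2x_i^2 x_j x_k$, so it suffices to show this scalar expression is nonnegative for every $x\in\Ealpha$.

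The key step is to bound the coefficient $x_i^2$ on the ellipsoid. Writing $x_i = e_i^\top x = (E^{-1/2}e_i)^\top(E^{1/2}x)$ and applying Cauchy--Schwarz gives $x_i^2 \le \norm{E^{-1/2}e_i}^2\,\norm{E^{1/2}x}^2 = (E^{-1})_{ii}\,(x^\top E x) \le \alpha^2 (E^{-1})_{ii}$ for all $x\in\Ealpha$; equivalently, this bound is the specialization of Lemma~\ref{lemma:CSQC} to $Q=e_ie_i^\top$. Since both $\alpha^2(E^{-1})_{ii}-x_i^2 \ge 0$ and $(x_j+x_k)^2\ge 0$, I can replace the constant $\alpha^2(E^{-1})_{ii}$ by the smaller quantity $x_i^2$ to get the lower bound
\[
\alpha^2(E^{-1})_{ii}(x_j+x_k)^2 - 2x_i^2 x_j x_k \ge x_i^2(x_j+x_k)^2 - 2x_i^2 x_j x_k = x_i^2\bigl(x_j^2 + x_k^2\bigr) \ge 0,
\]
which is exactly the claimed inequality.

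The bulk of the argument is routine once the right decomposition is spotted, so the main obstacle is conceptual rather than computational: recognizing that after the substitution $\phi_1\phi_2 = x_i^2 x_j x_k$, the cross term $-2x_i^2 x_j x_k$ combines with the expansion of $(x_j+x_k)^2$ into the perfect sum of squares $x_i^2(x_j^2+x_k^2)$. This is precisely what dictates the choice $c = e_j + e_k$ in the statement. The scaling constant $\alpha^2(E^{-1})_{ii}$ is exactly $\max_{x\in\Ealpha} x_i^2$, so the Cauchy--Schwarz bound it encodes is tight, attained on $\partial\Ealpha$ in the direction $E^{-1}e_i$; I would note this to connect the result with the paper's stated aim of tightening the QC along the direction where the quadratic vanishes.
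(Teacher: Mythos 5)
Your proof is correct and follows essentially the same route as the paper, which obtains this lemma as the special case $c_1=e_i$, $d=e_j+e_k$ of its Cross-Product QC (Theorem~\ref{theorem:CrossQC}): your perfect-square identity $x_i^2(x_j+x_k)^2 - 2x_i^2x_jx_k = x_i^2(x_j^2+x_k^2)\geq 0$ is exactly the content of Lemma~\ref{lemma:CrossP}, and your bound $x_i^2 \leq \alpha^2(E^{-1})_{ii}$ on $\Ealpha$ is the specialization of Lemma~\ref{lemma:maxEig}/Lemma~\ref{lemma:Valley} to $Q=e_ie_i^\top$. The only differences are cosmetic: you flatten the argument into a single computation and prove the ellipsoid bound via Cauchy--Schwarz rather than the paper's eigenvalue argument.
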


A similar variable substitution can be used to re-write~\eqref{eq:TosoQC} in the form of~\eqref{eq:QC}. The QC in~\cite{toso2022regional} was formulated using a hyperrectangle for the local region. Lemma~\ref{lemma:TosoQC} is a variation stated using an ellipsoid $\Ealpha$ for the local region. This causes a slight difference in the coefficient matrix in the QC. Section~\ref{sec:CrossQC} will present a more general QC (with proof) which includes Lemma~\ref{lemma:TosoQC}.

\subsection{Local Stability Condition with QC and Lyapunov Stability}

% ===[ Dissipation inequality analysis ]===
The QCs~\eqref{eq:QC} can be used to formulate a Lyapunov condition for local stability analysis. Here, we illustrate the approach with a condition to estimate the region of attraction (ROA) for the system in~\eqref{eq:NLsys}. The ROA of an equilibrium $x_e=0$ is defined as the set of initial conditions for which the solution $x(t)$ of~\eqref{eq:NLsys} asymptotically converges to the equilibrium. The next theorem from~\cite{kalur2021LCSS} provides a matrix inequality condition that gives a spherical ROA estimate of the system~\eqref{eq:NLsys}.
\begin{theorem}     \label{theorem:ROA}
    Let $E=E^\top\succ0$ and $\alpha>0$ be given. Moreover, assume the nonlinearity $z(\cdot)$ in the system~\eqref{eq:LureSys} satisfies a set of QCs~\eqref{eq:QC}. If $\exists P=P^\top\in\Ri{n\times n}, r>0$ and $\xi_1,\dots\xi_k\in\Ri{}$ such that:
    \begin{subequations}    \label{eq:ROA}
    \begin{align}
        &\bmtx A^\top P + PA & PB \\ B^\top P & 0 \emtx + \sum_{i=1}^k \xi_i M_i \prec 
        \bmtx -\epsilon I & 0 \\ 0 & 0 \emtx \label{eq:ROA_Vdot}\\
        &\frac{1}{\alpha^2}E \preceq P \preceq \frac{1}{r^2}I \label{eq:ROA_set}\\
        &\xi_i \geq 0 \mbox{ for } i=1,\dots,k, \label{eq:ROA_LagMul}
    \end{align}
    \end{subequations}
    then $x_e=0$ is a locally asymptotically stable equilibrium. Moreover, $\{x:x^\top x \leq r^2\}$ is a ROA estimate of system~\eqref{eq:NLsys}.
\end{theorem}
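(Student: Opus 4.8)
The plan is to take $V(x)=x^\top P x$ as a Lyapunov function and run the standard dissipation-inequality (S-procedure) argument. First I would check that $V$ is an admissible, positive definite candidate: the left inequality in~\eqref{eq:ROA_set} gives $\frac{1}{\alpha^2}E\preceq P$, and since $E\succ0$ this forces $P\succ0$, so $V(x)>0$ for all $x\neq0$.

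Next I would differentiate $V$ along trajectories of the Lur'e system~\eqref{eq:LureSys}. Setting $w=z(x)$ and expanding,
\[
\dot V = x^\top(A^\top P + PA)x + 2x^\top PBw
= \bmtx x \\ w \emtx^\top \bmtx A^\top P + PA & PB \\ B^\top P & 0 \emtx \bmtx x \\ w \emtx .
\]
The core step is to bound this quadratic form. For $x\in\Ealpha$ the assumed QCs hold, so $\bsmtx x \\ w \esmtx^\top M_i \bsmtx x \\ w \esmtx\geq0$; multiplying by $\xi_i\geq0$ from~\eqref{eq:ROA_LagMul} and summing over $i$ makes $\sum_i\xi_i M_i$ contribute a nonnegative amount to the quadratic form. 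Adding that nonnegative quantity and then applying the strict LMI~\eqref{eq:ROA_Vdot} yields $\dot V < -\epsilon\,x^\top x\leq0$ for every nonzero $x\in\Ealpha$ with $w=z(x)$.

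Then I would assemble the set-containment chain that certifies the ROA estimate, using $\Omega:=\{x:V(x)\leq1\}$. Condition~\eqref{eq:ROA_set} supplies both inclusions: $\frac{1}{\alpha^2}E\preceq P$ gives $\frac{1}{\alpha^2}x^\top Ex\leq V(x)$, so $V(x)\leq1$ forces $x^\top Ex\leq\alpha^2$ and hence $\Omega\subseteq\Ealpha$; while $P\preceq\frac{1}{r^2}I$ gives $V(x)\leq\frac{1}{r^2}x^\top x$, so $x^\top x\leq r^2$ forces $V(x)\leq1$ and hence $\{x:x^\top x\leq r^2\}\subseteq\Omega$. Because $\Omega\subseteq\Ealpha$, the bound from the previous paragraph holds on all of $\Omega\setminus\{0\}$.

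Finally I would close with an invariance/LaSalle step, which I expect to be the main obstacle, since the QC bound on $\dot V$ is only valid \emph{inside} $\Ealpha$ and one must avoid circular reasoning. The resolution is that $\Omega$ is a compact sublevel set of the positive definite $V$, and $\dot V<0$ on $\Omega\setminus\{0\}$ makes $V$ strictly decreasing along any trajectory started in $\Omega$; therefore $V(x(t))\leq V(x(0))\leq1$ for all $t\geq0$, so the trajectory never leaves $\Omega$, and a fortiori never leaves $\Ealpha$ where the bound was assumed. This self-consistency removes the circularity, and Lyapunov's theorem then delivers local asymptotic stability of $x_e=0$ together with convergence to the origin for every initial condition in $\Omega$. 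Since $\{x:x^\top x\leq r^2\}\subseteq\Omega$, this ball is a valid ROA estimate, completing the argument.
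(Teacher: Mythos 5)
Your proposal is correct and follows essentially the same route as the paper's proof: the Lyapunov candidate $V(x)=x^\top P x$, left/right multiplication of~\eqref{eq:ROA_Vdot} by $\bmtx x^\top & w^\top \emtx$ combined with the nonnegativity of $\xi_i$ and the QCs on $\Ealpha$ to get $\dot V<-\epsilon\|x\|_2^2$, and the two set containments $\{x:x^\top x\leq r^2\}\subseteq\{x:V(x)\leq1\}\subseteq\Ealpha$ extracted from~\eqref{eq:ROA_set}. Your explicit invariance argument for the sublevel set (resolving the apparent circularity of the QCs holding only inside $\Ealpha$) is a detail the paper's summary leaves implicit, but it is the standard completion of the same argument rather than a different approach.
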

\begin{proof}
The proof relies on standard Lyapunov stability arguments~\cite{Khalil:1173048} combined with QCs~\cite{megretski1997system}. A proof is given in~\cite{kalur2021LCSS} and~\cite{Liu2020io-inspired} but is briefly summarized here for completeness. Define the Lyapunov function $V(x) := x^\top P x$. Inequality~\eqref{eq:ROA_set} implies that $V$ is positive definite. Left/right multiply~\eqref{eq:ROA_Vdot} by $\bmtx x(t)^\top & w(t)^\top \emtx$ and its transpose to show:
\begin{align*}
    \frac{d}{dt}V(x(t)) + \sum_{i=1}^k \xi_i \bmtx x(t) \\ w(t) \emtx^\top M_i \bmtx x(t) \\ w(t) \emtx < 
    -\epsilon \norm{x(t)}_2^2.
\end{align*}
This implies $\frac{d}{dt}V(x(t))< -\epsilon\norm{x(t)}_2^2$ for any $x(t)\in\Ealpha$ since $\xi_i$ and the QCs are non-negative for any $x(t)\in\Ealpha$. The equilibrium $x_e=0$ is locally asymptotically stable by Lyapunov stability theory~\cite{Khalil:1173048}. 

Inequality~\eqref{eq:ROA_set} implies that the level set $\{x:x^\top P x \leq 1\}$ is contained in $\Ealpha$. Thus $x(t)$ converges to $x_e$ for any initial condition in the level set $\{x:x^\top P x\leq 1 \}$, i.e. the level-set is contained in the region of attraction. Finally, inequality~\eqref{eq:ROA_set} implies that the spherical set $\{x:x^\top x \leq r^2\}$ is contained in the level set $\{x:x^\top P x\leq 1\}$. 
\end{proof}

% Similar conditions as in Theorem~\ref{theorem:ROA} can be formulated for other system analysis problems, such as reachability, robustness, and performance analysis. \commentg{Give an example or short explanation how to use as dissipative inequality.}

Note that Lyapunov stability condition can be viewed as a special case of dissipation inequality. Similar conditions as in Theorem~\ref{theorem:ROA} can be formulated for other system properties, such as reachability, robustness, and performance.

\subsection{Conservatism of Existing QCs}

% \commentg{
% \begin{itemize}
%     \item tightness of QC -> conservatism
%     \item CSQC on two case of monomials 
%     \item CSQC of landscapes on $x_i x_j$
% \end{itemize}
% }

% The use of QCs enable numerically sythesizing valid matrices and scalars, satisfying the conditions that provide system analysis. If a QC bound the nonlinearity too coarsely, the analysis conditions (such as conditions in Theorem~\ref{theorem:ROA}) might not be feasible. Thus, a set of QCs describing the nonlinearity more accurately is more favorable. 

The QCs bound the effect of the nonlinearity in the local region. This enables the estimate of the ROA of~\eqref{eq:NLsys} (or other system properties) via Lyapunov or dissipation inequality conditions. However, if the QC bounds the nonlinearity too "loosely" then the analysis condition will be conservative. The remainder of this section provides an example to illustrate this issue. This motivates the construction of new QCs in Section~\ref{sec:III_QCs}.

Here, we present that the CSQC~\eqref{eq:CSQC} fails to tightly bound a quadratic function $x^\top Q x$ where $Q$ is sign-indefinite. To illustrate, consider the case $\phi(x) = x_1x_2$ in the local region of a unit sphere ($E=I$ and $\alpha = 1$). The function $\phi(x)$ corresponds to the matrix $Q = \bsmtx 0 & 0.5 \\ 0.5 & 0 \esmtx$. The CSQC~\eqref{eq:CSQC} on $\phi(x)$ corresponds to the inequality:
\begin{align}
    0.25(x_1^2+x_2^2) \geq \phi(x)^2 & & \forall x^\top x\leq 1.  \label{eq:CSQC_phi}
\end{align}
Fig.~\ref{fig:CSQC_xixj} visualizes each side of the inequality~\eqref{eq:CSQC_phi}. The landscape of the right-hand side ($\phi(x)^2$, green surface) has peaks along the directions $x_1=\pm x_2$ and valleys along $\phi(x) = 0$ ($x_1=0$ and $x_2=0$). Note that the left side ($0.25(x_1^2 + x_2^2)$, blue surface) provides a tight upper bound of the green surface along the peaks. However, the blue surface provides a loose bound along the valleys of the green surface.

These landscape properties (peaks and valleys) are an inherent feature of a sign-indefinite quadratic function $\phi(x)$, as they have multiple directions for which $\phi(x) = 0$. Hence, the loose bound of QCSC~\eqref{eq:CSQC} does not depend on the shape and the size of the ellipsoidal region $\Ealpha$. Furthermore, this looseness will potentially lead to conservative analysis. The next section proposes new QCs to reduce the conservatism.

% We note that the QC~\eqref{eq:CSQC} bounds the class of quadratic monomial $\phi(x) = x_i^2$ well over the entire region $\Ealpha$. The landscape of $\phi(x)^2$ has only one valley along $x_i=0$, which is the direction QC~\eqref{eq:CSQC} being strict. Hence, the additional QCs in the following sections does not help on this case of quadratic function. 

\begin{figure}[hb]
    \centering
    \includegraphics[width=0.95\linewidth]{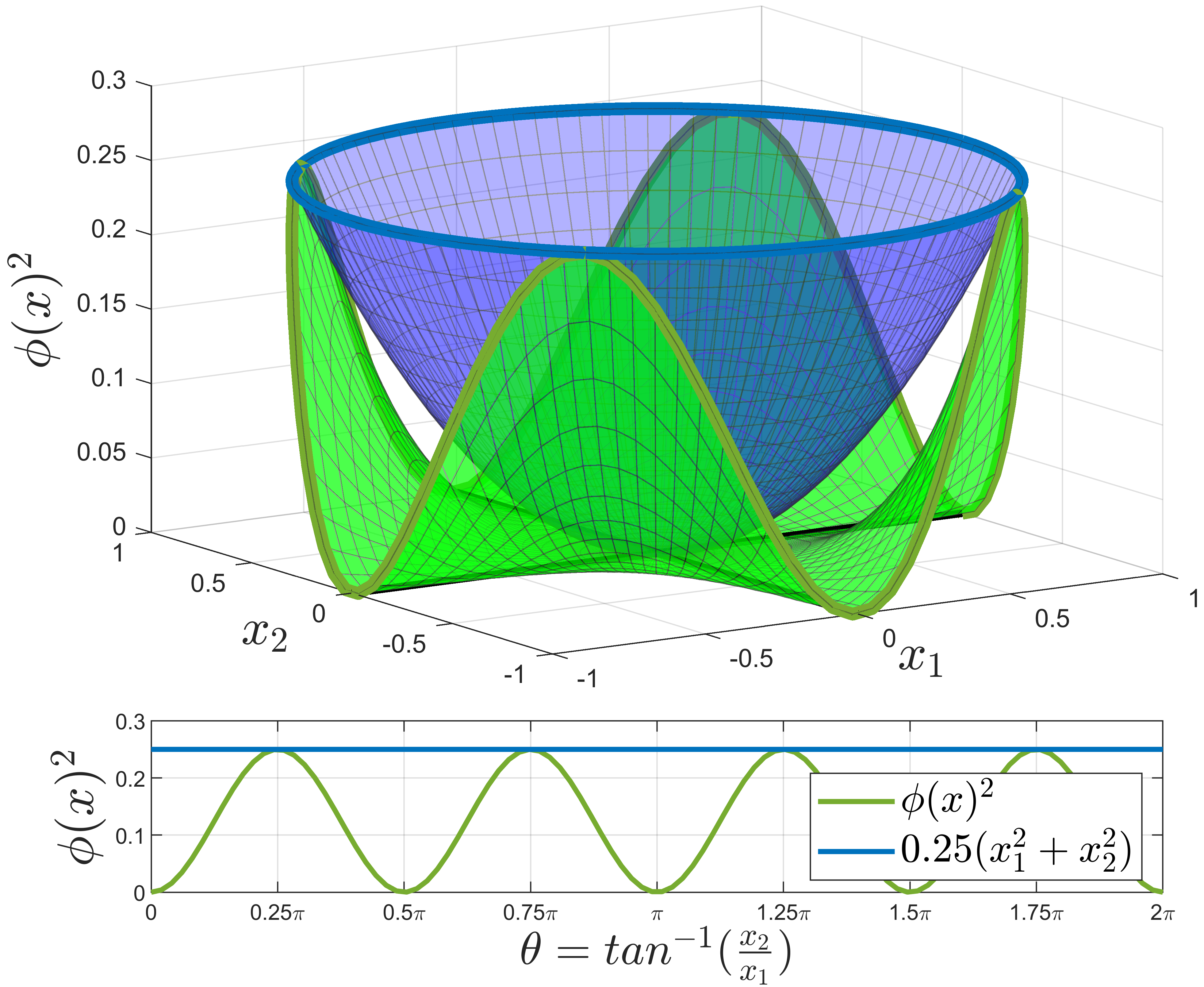}
    % \caption{ Visualization of each side of the QC~\eqref{eq:CSQC_phi}. The top subplot shows the right side ($\phi(x)^2$, green surface) where $\phi(x) = x_1x_2$ vs $(x_1,x_2)$. The plot also shows the left side ($0.25(x_1^2+x_2^2)$, blue surface) upper bounds the right side on a unit sphere. The lower subplot shows the two sides of QC on the boundary of the unit sphere $(x^\top x=1)$. The blue surface tightly bounds the green only at the peaks, but provides a loose bound at the valleys.}
    \caption{ Visualization of each side of the QC~\eqref{eq:CSQC_phi}. The top subplot shows the right-hand side ($\phi(x)^2$) and the left-hand side ($0.25(x_1^2+x_2^2)$ in a unit sphere. The lower subplot shows the two sides of QC on the boundary of the unit sphere $(x^\top x=1)$. The blue surface tightly bounds the green only at the peaks, but provides a loose bound at the valleys.}
    \label{fig:CSQC_xixj}
\end{figure}

\section{Local QCs on Quadratic Nonlinearities}     \label{sec:III_QCs}

% \commentg{
% \begin{itemize}
%     \item CSQC
%     \begin{itemize}
%         \item theorem
%         \item landscape of monomial and CSQC
%     \end{itemize}
%     %
%     \item Valley QC on monomial
%     \begin{itemize}
%         \item landscape of Valley QC
%         \item Lemma (max $(c^\top x)^2$ over $x\in\mathcal{E}_\alpha$)
%         \item Lemma ($(c_1^\top x)^2(c_2^\top x)^2 \leq \alpha^2x^\top W x \forall x\in\mathcal{E}_\alpha$)
%         \item Theorem of Valley QC
%     \end{itemize}
%     %
%     \item Valley QC on Rank-2
%     \begin{itemize}
%         \item Theorem
%     \end{itemize}
%     %
%     \item QC on cross-product
%     \begin{itemize}
%         \item Theorem
%     \end{itemize}
%     %
%     \item QC on Rank-3
%     \begin{itemize}
%         \item Theorem
%     \end{itemize}
% \end{itemize}
% }

New QCs are introduced in Section~\ref{sec:ValleyQC} and~\ref{sec:Rank3QC} to reduce the conservatism of CSQC~\eqref{eq:CSQC} by capturing the landscape properties of sign-indefinite quadratic functions. Specifically, QCs are presented for sign-indefinite quadratic function $x^\top Q x$ with $Q$ being rank-2 and rank-3. Furthermore, the method derived in Section~\ref{sec:ValleyQC} is applied to generalize the QC~\eqref{eq:TosoQC} in Section~\ref{sec:CrossQC}.

% [CSQC]
% \input{Methods/CSQC}

\subsection{QCs on Rank-2 Sign-indefinite Quadratic Functions}     \label{sec:ValleyQC}

% \commentg{
% \begin{itemize}
%     \item lemmas
%     \item general theorem
%     \item illustrate on the case $x_ix_j$
% \end{itemize}
% }

The CSQC in~\eqref{eq:CSQC_phi} provides a bound for $\phi(x) = x_1x_2$ on the unit sphere. Consider the following alternative bound:
\begin{align}
    x_1^2 \geq \phi(x)^2 & & \forall x^\top x \leq 1.    \label{eq:ValleyQC_xi}
\end{align}
This is a valid QC since $x_1^2-\phi(x)^2 = (1-x_2^2)x_1^2$ and $x_2^2\leq x^\top x \leq 1$. Fig.~\ref{fig:ValleyQC} illustrates that the left side of~\eqref{eq:ValleyQC_xi} ($x_1^2$, red surface) provides an upper bound of the right side ($\phi(x)^2$, green surface). Furthermore, this QC is specifically tight along the direction $x_1 = 0$. Similarly, the inequality 
\begin{align}
    x_2^2 \geq \phi(x)^2 & &  \forall x^\top x \leq 1     \label{eq:ValleyQC_xj}
\end{align}
is also a valid QC in the unit sphere. The left side of QC~\eqref{eq:ValleyQC_xj} corresponds to a similar surface as the red surface with 90-degree rotation, i.e., it tightly bounds $\phi(x)^2$ along the direction $x_2=0$. Note that the two QCs~\eqref{eq:ValleyQC_xi} and~\eqref{eq:ValleyQC_xj} are each tight on one valley of $\phi(x)^2$. QCs~\eqref{eq:ValleyQC_xi} and~\eqref{eq:ValleyQC_xj} together with the CSQC~\eqref{eq:CSQC_phi} tightly bound the peaks and valleys of the quadratic function $\phi(x) = x_1x_2$.

\begin{figure}[b!]
    \centering
    \includegraphics[width=0.95\linewidth]{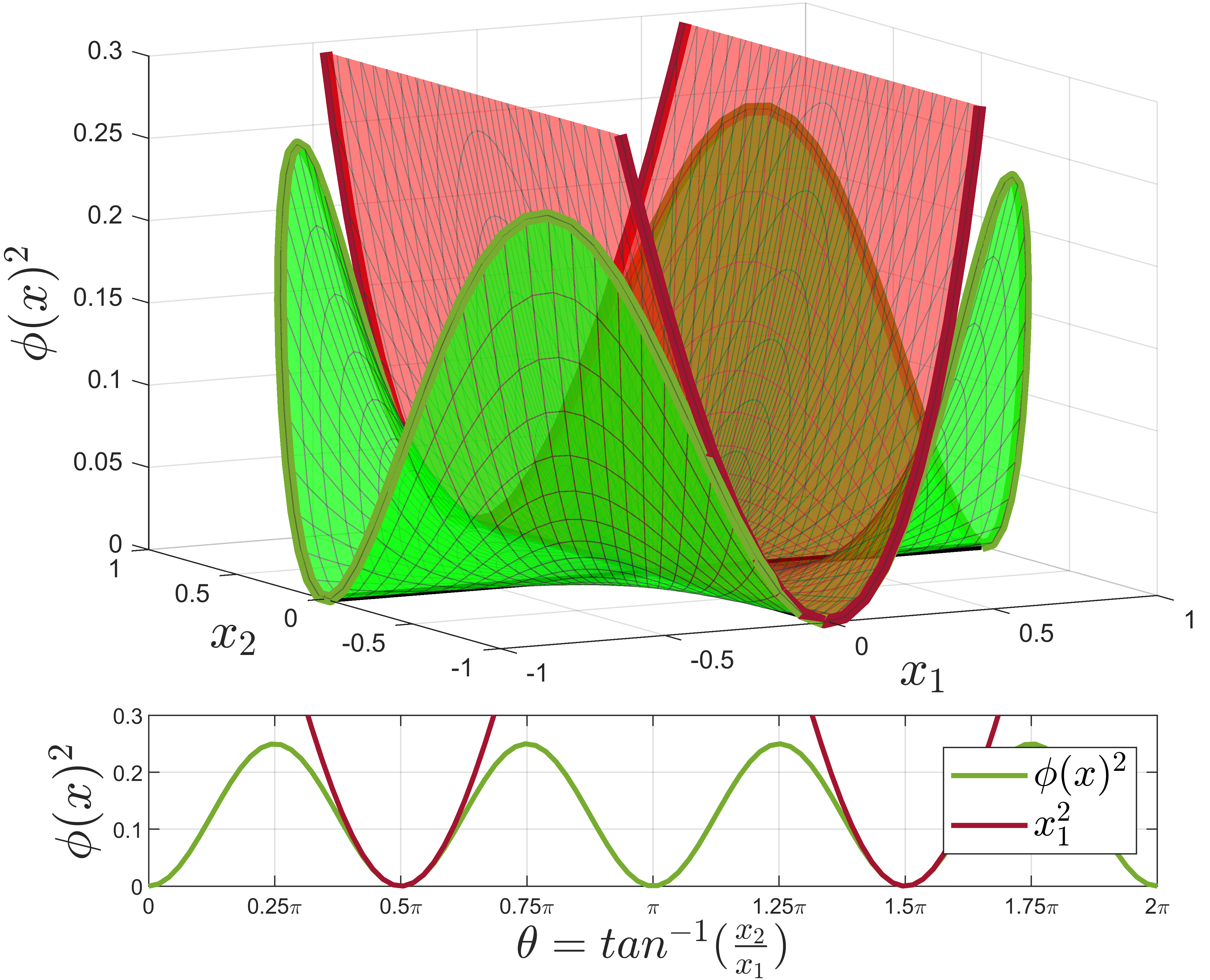}
    % \caption{Visualization of each side of the QC~\eqref{eq:ValleyQC_xi}. The top subplot shows the right side ($\phi(x)^2$, green surface) where $\phi(x)=x_1x_2$ vs $(x_1,x_2)$. The plot also shows the left side ($x_1^2$, red surface) upper bounds the green surface on a unit sphere. The lower subplot shows the two sides of the QC on the boundary of the unit sphere $(x^\top x=1)$. The red surface tightly bounds the green along one of the valleys, but provides a loose bound at the peaks and the other valley. Note that the other QC~\eqref{eq:ValleyQC_xj} (not shown) tightly bounds the other valley of the green surface. }
    \caption{Visualization of each side of the QC~\eqref{eq:ValleyQC_xi}. The top subplot shows the right-hand side ($\phi(x)^2$) and the left-hand side ($x_1^2$) in on a unit sphere. The lower subplot shows the two sides of the QC on the boundary of the unit sphere $(x^\top x=1)$. The red surface tightly bounds the green along one of the valleys, but provides a loose bound at the peaks and the other valley.}
    \label{fig:ValleyQC}
\end{figure}

Here, we generalize the above QCs beyond quadratic monomials $\phi(x) = x_1x_2$ to any quadratic function having a similar function landscape. The following lemmas establish the proposed constraints using the largest eigenvalue of $Q$, denoted as $\lambda_{max}(Q)$.
\begin{lemma}       \label{lemma:maxEig}
    Let $E=E^\top\succ0, \alpha>0$, and matrix $Q=Q^\top\in\Ri{n\times n}$ be given. Assume $\lambda_{max}(Q) > 0$. Then:
    \begin{align*}
        \alpha^2\lambda_{max} (\tilde{Q}) = 
        &\max_{x\in\Ealpha} \quad x^\top Q x,
    \end{align*}
    where $\tilde{Q}=E^{-\frac{1}{2}}QE^{-\frac{1}{2}}$. Furthermore, if $Q = cc^\top$ for some nonzero $c\in\Rn$, then  $\lambda_{max}(\tilde{Q}) = c^\top E^{-1}c > 0$.
\end{lemma}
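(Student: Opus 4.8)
The plan is to convert the constrained maximization into a standard Rayleigh-quotient problem through a congruence change of variables. Since $E = E^\top \succ 0$, the symmetric square root $E^{\frac{1}{2}}$ is well-defined and invertible. First I would substitute $y = E^{\frac{1}{2}} x$ (equivalently $x = E^{-\frac{1}{2}} y$). Under this substitution the constraint $x^\top E x \leq \alpha^2$ becomes $y^\top y \leq \alpha^2$, a ball of radius $\alpha$, while the objective transforms as $x^\top Q x = y^\top E^{-\frac{1}{2}} Q E^{-\frac{1}{2}} y = y^\top \tilde{Q} y$. Because the map $x \mapsto y$ is a bijection, the two optimization problems have the same optimal value, so it suffices to evaluate $\max_{\norm{y}_2 \leq \alpha} y^\top \tilde{Q} y$.

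For this reduced problem I would invoke the standard result that $\max_{\norm{y}_2 = 1} y^\top \tilde{Q} y = \lambda_{max}(\tilde{Q})$, attained at a unit eigenvector of $\tilde{Q}$ associated with $\lambda_{max}(\tilde{Q})$; scaling to radius $\alpha$ gives the candidate value $\alpha^2 \lambda_{max}(\tilde{Q})$. The subtle point, and the step I expect to require the most care, is arguing that the maximum over the \emph{closed} ball is actually attained on its boundary rather than in the interior. This needs $\lambda_{max}(\tilde{Q}) > 0$: since $E^{-\frac{1}{2}}$ is invertible, $\tilde{Q}$ is congruent to $Q$, so by Sylvester's law of inertia $\tilde{Q}$ and $Q$ have the same number of positive eigenvalues. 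The hypothesis $\lambda_{max}(Q) > 0$ therefore guarantees $\lambda_{max}(\tilde{Q}) > 0$, which ensures the quadratic form strictly increases as $y$ is scaled outward along the leading eigenvector, forcing the optimum onto $\norm{y}_2 = \alpha$ with value exactly $\alpha^2 \lambda_{max}(\tilde{Q})$.

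Finally, for the rank-one case $Q = cc^\top$, I would compute $\tilde{Q}$ directly: setting $u := E^{-\frac{1}{2}} c$ gives $\tilde{Q} = E^{-\frac{1}{2}} cc^\top E^{-\frac{1}{2}} = u u^\top$. A rank-one symmetric matrix $uu^\top$ has eigenvalues $\norm{u}_2^2$ (with multiplicity one) and $0$ (with multiplicity $n-1$), so $\lambda_{max}(\tilde{Q}) = u^\top u = c^\top E^{-1} c$. Positivity is immediate since $E^{-1} \succ 0$ and $c \neq 0$, which in particular re-confirms $\lambda_{max}(\tilde{Q}) > 0$ in this special case and closes the lemma.
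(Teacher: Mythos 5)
Your proposal is correct and takes essentially the same route as the paper: a change of variables via $E^{\frac{1}{2}}$ reducing the problem to a Rayleigh-quotient maximization over a ball, Sylvester's law of inertia to get $\lambda_{max}(\tilde{Q})>0$ from $\lambda_{max}(Q)>0$, and a direct rank-one eigenvalue computation for $Q=cc^\top$. The only difference is cosmetic (the paper scales $y=\frac{1}{\alpha}E^{\frac{1}{2}}x$ to the unit ball and cites Horn and Johnson for the two matrix facts, while you spell out the boundary-attainment argument explicitly), so there is nothing to add.
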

\begin{proof}
    Define $y = \frac{1}{\alpha}E^{\frac{1}{2}}x$ and $\Tilde{Q} = E^{-\frac{1}{2}}QE^{-\frac{1}{2}}$ so that the constrained optimization problem becomes:
    % $max_{y^\top y \leq 1} \alpha^2 y^\top \tilde{Q}y$.
    \begin{align*}
        \max_{y^\top y \leq 1} \alpha^2 y^\top \tilde{Q} y.
    \end{align*}
    Note that $\lambda_{max}(Q)>0$ implies $\lambda_{max}(\tilde{Q}) > 0$ (Theorem 4.5.8~\cite{horn2012matrix}). Hence, the problem corresponds to finding the largest eigenvalue of $\tilde{Q}$ (Theorem 4.2.2~\cite{horn2012matrix}). Furthermore, if $Q = cc^\top$ is an outer product of a vector $c\in\Rn$, then $\tilde{Q}$ is rank-1 and has an eigenvector $E^{-\frac{1}{2}}c$ with associated eigenvalue $c^\top E^{-1}c > 0$.
\end{proof}

\begin{lemma}       \label{lemma:Valley}
    Let $E=E^\top\succ0, \alpha>0$ and vectors $c_1,c_2\in\Rn$ be given. The quadratic function $\phi(x) = x^\top Q x$ with $Q = \frac{1}{2}(c_1c_2^\top+c_2c_1^\top)$ satisfies the following inequality:
    \begin{align}   
        \bmtx x \\ \phi(x) \emtx^\top  
        \bmtx \alpha^2 W & 0 \\ 0 & -1 \emtx
        \bmtx x \\ \phi(x) \emtx    \geq 0 & & \forall x\in\Ealpha  \label{eq:lemma_Valley}
    \end{align}
    with $W= (c_1^\top E^{-1}c_1)c_2c_2^\top$ or $(c_2^\top E^{-1}c_2)c_1c_1^\top$. 
\end{lemma}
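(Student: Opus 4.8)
The plan is to exploit the rank-one structure hidden inside $Q$. First I would rewrite the nonlinearity in factored form: since $Q=\frac{1}{2}(c_1c_2^\top + c_2 c_1^\top)$, a direct expansion gives $\phi(x)=x^\top Q x = (c_1^\top x)(c_2^\top x)$, so that $\phi(x)^2 = (c_1^\top x)^2(c_2^\top x)^2$. Expanding the left-hand side of~\eqref{eq:lemma_Valley}, the claimed inequality is equivalent to showing $\alpha^2 x^\top W x \ge \phi(x)^2$ for all $x\in\Ealpha$.

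Next, I would treat the two choices of $W$ separately; they are symmetric under swapping $c_1\leftrightarrow c_2$, so it suffices to handle one. Taking $W=(c_1^\top E^{-1}c_1)\,c_2c_2^\top$, the quadratic form becomes $\alpha^2 x^\top W x = \alpha^2 (c_1^\top E^{-1}c_1)(c_2^\top x)^2$. The key algebraic step is then to factor the difference as the identity
\[
    \alpha^2 x^\top W x - \phi(x)^2 = (c_2^\top x)^2 \left[ \alpha^2 (c_1^\top E^{-1}c_1) - (c_1^\top x)^2 \right].
\]
Since $(c_2^\top x)^2 \geq 0$ for every $x$, it suffices to show the bracketed term is nonnegative on $\Ealpha$, i.e.\ that $(c_1^\top x)^2 \le \alpha^2(c_1^\top E^{-1}c_1)$ for every $x\in\Ealpha$.

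Finally, I would establish this scalar bound by invoking Lemma~\ref{lemma:maxEig} with the rank-one matrix $Q=c_1c_1^\top$. That lemma gives $\max_{x\in\Ealpha}(c_1^\top x)^2 = \max_{x\in\Ealpha} x^\top c_1 c_1^\top x = \alpha^2 \lambda_{max}(\tilde{Q})$, and its rank-one clause identifies $\lambda_{max}(\tilde{Q}) = c_1^\top E^{-1}c_1$. Hence $(c_1^\top x)^2\le\alpha^2(c_1^\top E^{-1}c_1)$ on $\Ealpha$, the bracket is nonnegative, and the QC~\eqref{eq:lemma_Valley} follows. The case $W=(c_2^\top E^{-1}c_2)\,c_1c_1^\top$ is identical after interchanging the roles of $c_1$ and $c_2$.

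The argument is short, and the main work is conceptual rather than technical: the crucial observation is the factorization that separates a nonnegative square $(c_2^\top x)^2$ from the term whose boundedness on the ellipsoid is exactly what Lemma~\ref{lemma:maxEig} delivers. I do not anticipate a genuine obstacle, though care is needed to confirm that the factored difference is an exact identity (rather than merely an inequality) so that the reduction to the scalar bound loses nothing.
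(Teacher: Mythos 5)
Your proposal is correct and follows essentially the same route as the paper's own proof: the same factorization $\alpha^2 x^\top W x - \phi(x)^2 = \bigl(\alpha^2 c_1^\top E^{-1}c_1 - (c_1^\top x)^2\bigr)(c_2^\top x)^2$, the same appeal to Lemma~\ref{lemma:maxEig} with the rank-one matrix $c_1c_1^\top$ to bound $(c_1^\top x)^2$ on $\Ealpha$, and the same symmetry argument for the second choice of $W$. No meaningful differences to report.
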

\begin{proof}
    Note that $\phi(x) = (c_1^\top x) (c_2^\top x)$. The following inequality holds for the case $W=(c_1^\top E^{-1} c_1) c_2c_2^\top$:
            \small
    \begin{align}
        \alpha^2x^\top W x - \phi(x)^2 &= (\alpha^2c_1^\top E^{-1}c_1 - (c_1^\top x)^2) (c_2^\top x)^2. \label{eq:lemma_Valley_proof}
    \end{align}
    \normalsize
    Lemma~\ref{lemma:maxEig} implies that $(c_1^\top x)^2 = x^\top (c_1c_1^\top)x \leq \alpha^2 c_1^\top E^{-1} c_1$ for any $x\in\Ealpha$. Thus,~\eqref{eq:lemma_Valley} follows from this inequality applied to~\eqref{eq:lemma_Valley_proof}. The case $W=(c_2^\top E^{-1}c_2)c_1c_1^\top$ is shown similarly.
\end{proof}

% \commentg{Utilizing the above lemmas, the next theorem provides QCs for any quadratic function $x^\top Qx$ with $Q$ being rank 2 with one positive and one negative eigenvalue. We note that if $Q$ belongs to this case then $Q$ can be written as $\frac{1}{2}(c_1c_2^\top+c_2c_1^\top)$ with non-zero, linearly independent $c_1$ and $c_2$. This linear algebra fact is shown in Appendix~\ref{apdx:Rank2}.}

Utilizing the above lemmas, the next theorem provides QCs for any quadratic function $x^\top Qx$ with $Q$ having two non-zero eigenvalues of opposite sign. In particular, consider the case where $Q$ is rank 2 with one positive and one negative eigenvalue. Specifically, let $(\lambda_p, v_p)$ and $(\lambda_n, v_n)$ be the eigenpairs of $Q$ associated with the positive and negative eigenvalues. Then $Q$ can be written as $\frac{1}{2}(c_1c_2^\top+c_2c_1^\top)$ with $c_1 = \sqrt{\lambda_p}v_p + \sqrt{|\lambda_n|}v_n$ and $c_2 = \sqrt{\lambda_p}v_p - \sqrt{|\lambda_n|}v_n$. Note that $c_1$ and $c_2$ are nonzero, linearly independent vectors. These linear algebra facts are shown in Appendix~\ref{apdx:Rank2}.
The theorem below provides QCs for matrices of this form.

\begin{theorem}[Rank-2 Valley QC]       \label{theorem:ValleyQC}
    Let $c_1,c_2\in\Rn$ be two nonzero, linearly independent vectors. Define the quadratic function $\phi:\Rn\to\Ri{}$ as $\phi(x) = x^\top Q x$ where $Q=\frac{1}{2}(c_1c_2^\top+c_2c_1^\top)$. There exists $b\in\Ri{m}$ such that $\phi(x) = b^\top w$. Moreover, $\phi$ satisfies the following local QCs:
    \begin{align}   
        &\LMIVarXW^\top 
        \bmtx \alpha^2W & 0 \\ 0 & -bb^\top \emtx 
        \LMIVarXW \geq 0 \qquad \forall x\in\mathcal{E}_\alpha \label{eq:ValleyQC}
    \end{align}
    with $W = (c_2^\top E^{-1} c_2) c_1c_1^\top \text{ or } (c_1^\top E^{-1} c_1) c_2c_2^\top$.
\end{theorem}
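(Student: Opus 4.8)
The plan is to recognize that this theorem is nothing more than Lemma~\ref{lemma:Valley} re-expressed in the Lur'e coordinates $w=z(x)$, following exactly the substitution remark made after Lemma~\ref{lemma:CSQC}. Lemma~\ref{lemma:Valley} already delivers the scalar inequality $\alpha^2 x^\top W x - \phi(x)^2 \ge 0$ on $\Ealpha$ for both admissible choices of $W$, so all that remains is a change of variables that trades the scalar $\phi(x)$ for the monomial vector $w$.

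First I would invoke the fact from Section~II that any homogeneous quadratic function is a linear combination of the monomials in $z(x)$: since $\phi(x)=x^\top Q x$ with $Q=Q^\top$, there exists $b\in\Ri{m}$ with $\phi(x)=b^\top z(x)=b^\top w$. This is the $b$ named in the statement. Next I would encode the substitution as the linear map $T=\bsmtx I & 0 \\ 0 & b^\top \esmtx$, which satisfies $T\,\sLMIVarXW = \bsmtx x \\ \phi(x) \esmtx$. A direct block computation of the congruence transformation then yields
\[
    T^\top \bmtx \alpha^2 W & 0 \\ 0 & -1 \emtx T = \bmtx \alpha^2 W & 0 \\ 0 & -bb^\top \emtx,
\]
which is precisely the coefficient matrix appearing in~\eqref{eq:ValleyQC}. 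The key algebraic point here is the identity $(b^\top w)^2 = w^\top bb^\top w = \phi(x)^2$, which converts the scalar $-1$ weight on $\phi(x)$ in~\eqref{eq:lemma_Valley} into the rank-one weight $-bb^\top$ on $w$.

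Finally, since congruence preserves the value of the quadratic form under the associated change of variables, the form in~\eqref{eq:ValleyQC} evaluated at $\sLMIVarXW$ equals the form of Lemma~\ref{lemma:Valley} evaluated at $\bsmtx x \\ \phi(x) \esmtx$, namely $\alpha^2 x^\top W x - \phi(x)^2$. Lemma~\ref{lemma:Valley} guarantees this is nonnegative for all $x\in\Ealpha$, for both $W=(c_2^\top E^{-1}c_2)c_1c_1^\top$ and $W=(c_1^\top E^{-1}c_1)c_2c_2^\top$, which completes the argument.

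I do not expect a genuine obstacle: the analytic content lives entirely in Lemma~\ref{lemma:Valley} (via Lemma~\ref{lemma:maxEig}) and in the linear-algebra facts of Appendix~\ref{apdx:Rank2} establishing that $c_1,c_2$ can be taken nonzero and linearly independent. The one step warranting care is simply verifying the congruence computation above and confirming that the hypotheses on $c_1,c_2$ are exactly those required by Lemma~\ref{lemma:Valley}, so that no additional positivity or rank condition is silently assumed.
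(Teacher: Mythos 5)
Your proposal is correct and follows essentially the same route as the paper's own proof: invoke Lemma~\ref{lemma:Valley} to get $\phi(x)^2 \leq \alpha^2 x^\top W x$ on $\Ealpha$, then substitute $\phi(x) = b^\top w$ to arrive at~\eqref{eq:ValleyQC}. You merely make explicit the congruence computation with $T=\bsmtx I & 0 \\ 0 & b^\top \esmtx$ that the paper leaves implicit in its substitution remark.
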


\begin{proof}
    Note that $\phi(x) = (c_1^\top x)(c_2^\top x)$ and hence it follows from Lemma~\ref{lemma:Valley} that $\phi(x)^2 \leq \alpha^2 x^\top Wx$ for all $x$ in $\Ealpha$. Substitute $\phi(x) = b^\top w$ to obtain~\eqref{eq:ValleyQC}. 
\end{proof}
% ==== [ V8 end] ====

Note that the class of quadratic functions in Theorem~\ref{theorem:ValleyQC} is exactly the class with $Q$ being rank 2 with one positive and one negative eigenvalue (Appendix~\ref{apdx:Rank2}). Theorem~\ref{theorem:ValleyQC} provides two additional QCs along with CSQC~\eqref{eq:CSQC} that can capture this class of quadratic nonlinearity well in the local region $\Ealpha$. Inequalities~\eqref{eq:ValleyQC_xi} and~\eqref{eq:ValleyQC_xj} are examples of Rank-2 Valley QCs~\eqref{eq:ValleyQC} with $E=I, \alpha=1$, and $Q=\bsmtx 0 & 1\\ 1 & 0 \esmtx$.

\subsection{QCs on Rank-3 Sign-indefinite Quadratic Functions}        \label{sec:Rank3QC}

The concept of Rank-2 Valley QCs~\eqref{eq:ValleyQC} can be extended to more general quadratic functions. Here, we consider a quadratic function $x^\top Qx$, where $Q$ is rank 3 with two positive and one negative eigenvalue. Specifically, let $(\lambda_i, v_i)$ be the eigenparis of $Q$ for $i=1,2,3$ with $\lambda_1,\lambda_2$ being positive and $\lambda_3$ being negative. Then $Q$ can be written as $\frac{1}{2}(c_1c_2^\top + c_2c_1^\top)+c_3c_3^\top$, where $c_1 = \sqrt{\lambda_1}v_1 + \sqrt{|\lambda_3|}v_3, c_2 = \sqrt{\lambda_1}v_1 - \sqrt{|\lambda_3|}v_3$ and $c_3 = \sqrt{\lambda_2}v_2$. Note that $c_1,c_2$ are nonzero, linearly independent vectors with $c_3$ orthogonal to $c_1$ and $c_2$. These facts are shown in Appendix~\ref{apdx:Rank3}. The next theorem provides QCs for nonlinearity of this form.

\begin{theorem}[Rank-3 Valley QC]     \label{theorem:Rank3QC}
    Let $c_1,c_2,c_3\in\Rn$ be three nonzero, linearly independent vectors with $c_3$ orthogonal to $c_1$ and $c_2$. Define the quadratic function $\phi:\Rn\to\Ri{}$ as $\phi(x) = x^\top Qx$ where $Q=\frac{1}{2}(c_1c_2^\top + c_2c_1^\top)+c_3c_3^\top$. There exists $b\in\Ri{m}$ such that $\phi(x)=b^\top w$. Moreover, $\phi$ satisfies the following local QCs:
    \begin{align}   
        \LMIVarXW^\top
        &\bmtx \alpha^2(W + \gamma c_3c_3^\top) & 0 \\ 0 & -bb^\top \emtx
        \LMIVarXW \geq 0\,\,\forall x\in\mathcal{E}_\alpha \label{eq:Rank3ValleyQC_phi}
        % \\ \gamma &= \lambda_{max}(E^{-\frac{1}{2}}(2Q-c_3c_3^\top)E^{-\frac{1}{2}})
    \end{align}
    with $W= (c_1^\top E^{-1}c_1)c_2c_2^\top$ or $(c_2^\top E^{-1}c_2)c_1c_1^\top$ and $\gamma = \lambda_{max}(E^{-\frac{1}{2}}(2Q-c_3c_3^\top)E^{-\frac{1}{2}})$.
\end{theorem}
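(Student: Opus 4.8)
The plan is to reduce the matrix inequality to a scalar bound and then exhibit an explicit decomposition into nonnegative terms, mirroring the strategy in the proof of Theorem~\ref{theorem:ValleyQC}. First I would substitute $\phi(x)=b^\top w$ exactly as in the rank-2 case, which turns the claimed QC~\eqref{eq:Rank3ValleyQC_phi} into the equivalent statement
\begin{align*}
    \phi(x)^2 \leq \alpha^2\, x^\top\!\left(W + \gamma c_3c_3^\top\right) x \qquad \forall x\in\Ealpha.
\end{align*}
Using the structure of $Q$, I would rewrite the nonlinearity in factored form as $\phi(x) = (c_1^\top x)(c_2^\top x) + (c_3^\top x)^2$, and treat the case $W = (c_1^\top E^{-1}c_1)c_2c_2^\top$ in detail (the other case being symmetric by swapping the roles of $c_1$ and $c_2$).

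The crux of the argument is a purely algebraic identity for the gap $\alpha^2 x^\top(W+\gamma c_3c_3^\top)x - \phi(x)^2$. Expanding $\phi(x)^2 = \bigl((c_1^\top x)(c_2^\top x)+(c_3^\top x)^2\bigr)^2$ and collecting terms, I would show that the gap equals
\begin{align*}
    \bigl(\alpha^2 c_1^\top E^{-1}c_1 - (c_1^\top x)^2\bigr)(c_2^\top x)^2
    + (c_3^\top x)^2\bigl(\alpha^2\gamma - x^\top(2Q - c_3c_3^\top)x\bigr),
\end{align*}
where the second factor is recognized by noting that $x^\top(2Q-c_3c_3^\top)x = 2(c_1^\top x)(c_2^\top x)+(c_3^\top x)^2$. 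This rewriting is what lets the troublesome cross term $2(c_1^\top x)(c_2^\top x)(c_3^\top x)^2$ in $\phi(x)^2$ be absorbed cleanly into a single quadratic form involving $2Q-c_3c_3^\top$.

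Finally I would argue that each summand is nonnegative on $\Ealpha$. The first factor $\alpha^2 c_1^\top E^{-1}c_1 - (c_1^\top x)^2$ is nonnegative by applying Lemma~\ref{lemma:maxEig} to the rank-one matrix $c_1c_1^\top$, precisely as in Lemma~\ref{lemma:Valley}. The hard part will be justifying that Lemma~\ref{lemma:maxEig} may be invoked for $2Q-c_3c_3^\top$, since this requires $\lambda_{max}(2Q-c_3c_3^\top)>0$. Here I would use the orthogonality hypothesis $c_3\perp c_1,c_2$: it gives $(2Q-c_3c_3^\top)c_3 = \norm{c_3}^2 c_3$, so $c_3$ is an eigenvector with positive eigenvalue and hence $\lambda_{max}(2Q-c_3c_3^\top)\geq\norm{c_3}^2>0$. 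Lemma~\ref{lemma:maxEig} then yields $\max_{x\in\Ealpha} x^\top(2Q-c_3c_3^\top)x = \alpha^2\gamma$, so the second factor $\alpha^2\gamma - x^\top(2Q-c_3c_3^\top)x$ is nonnegative and, since $(c_3^\top x)^2\geq 0$, the second summand is nonnegative. With both summands nonnegative, the gap is nonnegative and the QC follows.
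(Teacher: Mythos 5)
Your proposal is correct and follows essentially the same route as the paper's proof: the same factorization $\phi(x) = (c_1^\top x)(c_2^\top x) + (c_3^\top x)^2$, the same absorption of the cross term into $x^\top(2Q - c_3c_3^\top)x$, and the same invocation of Lemmas~\ref{lemma:maxEig} and~\ref{lemma:Valley}, merely rephrased as a decomposition of the gap into two nonnegative summands rather than a chain of inequalities. Your explicit check that $\lambda_{max}(2Q - c_3c_3^\top) \geq \norm{c_3}^2 > 0$ via the orthogonality of $c_3$ (which is needed before Lemma~\ref{lemma:maxEig} may be applied) is a detail the paper's proof leaves implicit, and is a welcome addition.
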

\begin{proof}
    Note that $\phi(x) = (c_1^\top x)(c_2^\top x) + (c_3^\top x)^2$. Lemma~\ref{lemma:maxEig} and~\ref{lemma:Valley} imply that the inequality below holds for all $x\in\Ealpha$:
    \begin{align*}
        \phi(x)^2 &= (c_1^\top x)^2(c_2^\top x)^2 + (2(c_1^\top x)(c_2^\top x) + (c_3^\top x)^2) (c_3^\top x)^2 \\
        &= (c_1^\top x)^2(c_2^\top x)^2 + (x^\top(2Q - c_3c_3^\top)x) (c_3^\top x)^2 \\
        &\leq \alpha^2 x^\top W x + \alpha^2 \gamma x^\top c_3c_3^\top x    \quad \forall x \in\Ealpha.
    \end{align*}
    Substitute $\phi(x) = b^\top w$ and re-arrange the inequality into the quadratic form to obtain~\eqref{eq:Rank3ValleyQC_phi}
\end{proof}
% ==== [ V8 end ] ====

Note that the class of quadratic functions in Theorem~\ref{theorem:Rank3QC} is exactly the class with $Q$ being rank 3 with two positive and one negative eigenvalue (Appendix~\ref{apdx:Rank3}). The Rank-3 Valley QCs~\eqref{eq:Rank3ValleyQC_phi} tighten the characterization of this class of quadratic function beside the CSQC~\eqref{eq:CSQC}. Furthermore, $Q$ can be alternatively written as $\frac{1}{2}(c_1c_2^\top + c_2c_1^\top)+c_3c_3^\top$, where $c_1 = \sqrt{\lambda_2}v_2 + \sqrt{|\lambda_3|}v_3, c_2 = \sqrt{\lambda_2}v_2 - \sqrt{|\lambda_3|}v_3$ and $c_3 = \sqrt{\lambda_1}v_1$. Hence, there exists four Rank-3 Valley QCs for a nonlinearity with rank-3 sign-indefinite matrix $Q$.

If a quadratic function $x^\top\tilde{Q}x$, where $\tilde{Q}$ has exactly one positive eigenvalue and exactly two negative eigenvalues, then the QCs~\eqref{eq:Rank3ValleyQC_phi} with $Q = -\tilde{Q}$ are valid QCs with identical proof. Also, Theorem~\ref{theorem:Rank3QC} recovers Theorem~\ref{theorem:ValleyQC} if we choose $c_3=0$ in~\eqref{eq:Rank3ValleyQC_phi} for the special case when $Q$ being rank 2. 

\subsection{QC on the Cross-product of Monomials}     \label{sec:CrossQC}

% The preceding subsections describe QCs in the form of $\psi(x) - \phi_i(x)^2\geq 0$, where a quadratic function $\psi:\mathbb{R}^n\rightarrow\mathbb{R}$ locally upper bounds the square of the quadratic function $\phi_i(x)$. In this subsection, we'll consider a QC of the form $\psi(x) - z_i(x)z_j(x)\geq 0$ that $\psi(x)$ upper bounds a product of two quadratic monomials. The general idea is inspired by~\cite{toso2022regional} and combines the proposed Valley QC in Section~\ref{sec:ValleyQC}.

% In this subsection, we generalize the QC~\eqref{eq:TosoQC} by the methods developed in Section~\ref{sec:ValleyQC}. Note that the QCs in Section~\ref{sec:ValleyQC} and~\ref{sec:Rank3QC} bound the effect of a single quadratic function. The QCs considered here involves the cross-product of two quadratic functions. For conciseness, the next theorem provides QCs on the cross-product of two quadratic monomials.

Section~\ref{sec:ValleyQC} and~\ref{sec:Rank3QC} consider QCs to bound the effect of a single quadratic function. This section considers QCs to bound the cross-product of two monomials. This generalizes the QC~\eqref{eq:TosoQC} developed previously in~\cite{toso2022regional}. \commentg{The next lemma provides an upper and lower bound on the cross-product.}
\begin{lemma}       \label{lemma:CrossP}
    Let $w_p$ and $w_q$ be quadratic monomials such that the cross-product has the form $w_pw_q = x_i^2x_jx_k$ with $j\neq k$. The following inequalities hold:
    \begin{align}
        -x_i^2(x_j-x_k)^2 \leq 2w_pw_q \leq x_i^2(x_j+x_k)^2. \label{eq:CrossP_relaxation}
    \end{align}
\end{lemma}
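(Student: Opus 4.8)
The plan is to reduce both inequalities to the elementary nonnegativity of $x_i^2(x_j^2+x_k^2)$. First I would use the hypothesis $w_pw_q = x_i^2x_jx_k$ to write $2w_pw_q = 2x_i^2x_jx_k$, so that the quantity being bounded depends only on the product of the two monomials and not on how it happens to be factored into $w_p$ and $w_q$. This observation also tells me the result is a global polynomial statement: nothing about the local region $\Ealpha$ is needed here, since the localization only enters later when these cross-product bounds are combined with Lemma~\ref{lemma:maxEig}.

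For the upper bound I would expand $(x_j+x_k)^2 = x_j^2 + 2x_jx_k + x_k^2$ and form the difference
\begin{align*}
    x_i^2(x_j+x_k)^2 - 2w_pw_q = x_i^2\bigl(x_j^2 + 2x_jx_k + x_k^2\bigr) - 2x_i^2x_jx_k = x_i^2(x_j^2 + x_k^2).
\end{align*}
Since this is a product of squares it is nonnegative for all $x$, establishing the right inequality. For the lower bound I would symmetrically expand $(x_j-x_k)^2 = x_j^2 - 2x_jx_k + x_k^2$ and form
\begin{align*}
    2w_pw_q + x_i^2(x_j-x_k)^2 = 2x_i^2x_jx_k + x_i^2\bigl(x_j^2 - 2x_jx_k + x_k^2\bigr) = x_i^2(x_j^2 + x_k^2) \geq 0,
\end{align*}
which rearranges to $-x_i^2(x_j-x_k)^2 \leq 2w_pw_q$.

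There is essentially no obstacle, as both displayed relations are polynomial identities in $x$ that reduce to the same nonnegative term $x_i^2(x_j^2+x_k^2)$; the cross terms $\pm 2x_i^2x_jx_k$ cancel exactly against $2w_pw_q$ by construction of the two shifted squares. The only point requiring a moment of care is confirming that the hypothesis $w_pw_q = x_i^2x_jx_k$ with $j\neq k$ is consistent across the admissible factorizations into quadratic monomials (for instance $w_p = x_i^2,\ w_q = x_jx_k$ or $w_p = x_ix_j,\ w_q = x_ix_k$), and that in every such case the product collapses to the single monomial $x_i^2x_jx_k$, so the bound is genuinely a statement about the product alone and is independent of the split.
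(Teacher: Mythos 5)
Your proposal is correct and follows essentially the same route as the paper: both arguments rest on the identities $2x_i^2x_jx_k = x_i^2\bigl((x_j+x_k)^2 - x_j^2 - x_k^2\bigr)$ and $2x_i^2x_jx_k = x_i^2\bigl(x_j^2 + x_k^2 - (x_j-x_k)^2\bigr)$, with both bounds reducing to the nonnegativity of $x_i^2(x_j^2+x_k^2)$. Your closing remarks (that the statement is global, independent of $\Ealpha$, and depends only on the product $w_pw_q$ rather than its factorization) are accurate but not needed for the proof itself.
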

\begin{proof}
    Note that $2x_i^2x_jx_k = x_i^2 ((x_j+x_k)^2 - x_j^2 - x_k^2)$ and hence $2w_pw_q \leq x_i^2(x_j+x_k)^2$. Similarly, $2w_pw_q = x_i^2 (x_j^2 + x_k^2 - (x_j-x_k)^2)$ and hence $2w_pw_q \geq -x_i^2(x_j-x_k)^2$.
\end{proof}

The next theorem utilizes the bounds~\eqref{eq:CrossP_relaxation} to provide QCs on the cross-product of monomials with the method developed in Section~\ref{sec:ValleyQC}.
\begin{theorem}[Cross-Product QC]     \label{theorem:CrossQC}
    Let $w_p$ and $w_q$ be quadratic monomials such that their cross-product has the form $w_pw_q = x_i^2x_jx_k$ with $j\neq k$. Then the cross-product satisfies the following four QCs:
    \begin{subequations}    \label{eq:CrossQC}
    \begin{align}
        \LMIVarXW^\top
        &\bmtx \alpha^2 W & 0 \\ 0 & \pm S_{pq} \emtx 
        \LMIVarXW \geq 0 \quad \forall x \in \mathcal{E}_\alpha \label{eq:CrossQC_QC}\\
        W &= (e_i^\top E^{-1}e_i)dd^\top \text{ or } (d^\top E^{-1} d) e_i e_i^\top, 
     \end{align}
    \end{subequations}
    where $d = e_j \mp e_k$, $S_{pq} = \bar{e}_p\bar{e}_q^\top + \bar{e}_q\bar{e}_p^\top$, and $e_i,e_j,e_k\in\mathbb{R}^n, \bar{e}_p,\bar{e}_q \in\mathbb{R}^m$ are standard basis vectors.
\end{theorem}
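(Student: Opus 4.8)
The plan is to reduce each of the four claimed QCs to the scalar bounds already supplied by Lemma~\ref{lemma:CrossP}, and then to eliminate one squared factor by passing to its maximum over $\Ealpha$ via Lemma~\ref{lemma:maxEig}. First I would unpack the quadratic form in~\eqref{eq:CrossQC_QC}. Since $S_{pq} = \bar{e}_p\bar{e}_q^\top + \bar{e}_q\bar{e}_p^\top$ and $w_p = \bar{e}_p^\top w$, $w_q = \bar{e}_q^\top w$, the lower-right block contributes $w^\top(\pm S_{pq})w = \pm 2w_pw_q$, while the upper-left block contributes $\alpha^2 x^\top W x$. Hence each QC in~\eqref{eq:CrossQC_QC} is equivalent to the scalar statement $\alpha^2 x^\top W x \pm 2w_pw_q \geq 0$ for all $x\in\Ealpha$, and reassembling the matrix form at the end is just the reverse of this bookkeeping.

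Next I would split on the sign of $S_{pq}$, coordinating it with the two bounds of Lemma~\ref{lemma:CrossP}. For the sign $-S_{pq}$ (so that $d = e_j + e_k$) I would invoke the upper bound $2w_pw_q \leq x_i^2(x_j+x_k)^2 = (e_i^\top x)^2(d^\top x)^2$; for the sign $+S_{pq}$ (so that $d = e_j - e_k$) I would invoke the lower bound $-x_i^2(x_j-x_k)^2 = -(e_i^\top x)^2(d^\top x)^2 \leq 2w_pw_q$. In both cases the task collapses to the single inequality $(e_i^\top x)^2(d^\top x)^2 \leq \alpha^2 x^\top W x$ on $\Ealpha$, which is precisely the mechanism already exploited in Lemma~\ref{lemma:Valley}.

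To close that final step I would bound one of the two squared linear forms by its ellipsoidal maximum. Applying Lemma~\ref{lemma:maxEig} with $Q = e_ie_i^\top$ gives $(e_i^\top x)^2 \leq \alpha^2(e_i^\top E^{-1}e_i)$, hence $(e_i^\top x)^2(d^\top x)^2 \leq \alpha^2(e_i^\top E^{-1}e_i)\,x^\top(dd^\top)x$, which is the choice $W = (e_i^\top E^{-1}e_i)dd^\top$; symmetrically bounding $(d^\top x)^2 \leq \alpha^2(d^\top E^{-1}d)$ yields $W = (d^\top E^{-1}d)e_ie_i^\top$. The two sign choices times the two choices of $W$ produce exactly the four QCs claimed.

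The main obstacle I anticipate is purely bookkeeping rather than analytic: correctly pairing the coordinated $\pm$ in $S_{pq}$ with the $\mp$ in $d = e_j\mp e_k$, and checking that the inequality directions survive composition. In the lower-bound case one must chain $2w_pw_q \geq -(e_i^\top x)^2(d^\top x)^2 \geq -\alpha^2 x^\top W x$, so the bound $(e_i^\top x)^2(d^\top x)^2 \leq \alpha^2 x^\top W x$ has to point the correct way after negation; in the upper-bound case the same bound is used directly. No idea beyond Lemmas~\ref{lemma:maxEig} and~\ref{lemma:CrossP} is required, which is consistent with the claim that this theorem generalizes Lemma~\ref{lemma:TosoQC}.
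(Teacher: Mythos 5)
Your proposal is correct and takes essentially the same route as the paper: Lemma~\ref{lemma:CrossP} supplies the sign-coordinated bounds, and the remaining inequality $(e_i^\top x)^2(d^\top x)^2 \leq \alpha^2 x^\top W x$ is handled by the ellipsoidal maximum argument. The only cosmetic difference is that the paper closes this step by citing Lemma~\ref{lemma:Valley} with $c_1 = e_i$, $c_2 = d$, whereas you inline that lemma's proof via Lemma~\ref{lemma:maxEig}; these are the same argument.
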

\begin{proof}
    Note that $2w_pw_q \leq x_i^2(x_j+x_k)^2$ by Lemma~\ref{lemma:CrossP}. Furthermore, $x_i^2(x_j+x_k)^2\leq x^\top (\alpha^2 W)x$ for the case $d=e_j+e_k$ for all $x\in\Ealpha$ by Lemma~\ref{lemma:Valley} with $c_1 = e_i$ and $c_2 = d$. Substitute $2w_pw_q = w^\top S_{pq} w$ and re-arrange the inequality into the quadratic form to obtain~\eqref{eq:CrossQC_QC} with $-S_{pq}$. The case $+S_{pq}$ and $d=e_j-e_k$ is shown similarly with inequality $-x_i^2(x_j-x_k)^2 \leq 2w_pw_q$ from Lemma~\ref{lemma:CrossP}. 
\end{proof}

% Theorem~\ref{theorem:CrossQC}
The index $i$ can be arbitrary in Lemma~\ref{lemma:CrossP} and Theorem~\ref{theorem:CrossQC}, including $j$ or $k$. Note that the case $j=k$ is excluded, as the relaxation~\eqref{eq:CrossP_relaxation} is not necessary. The corresponding QCs for the case $w_pw_q = x_i^2x_j^2$ can be formed as:
\begin{align}
        \LMIVarXW^\top
        &\bmtx \alpha^2 W & 0 \\ 0 & -\frac{1}{2}S_{pq} \emtx 
        \LMIVarXW \geq 0 \quad \forall x \in \mathcal{E}_\alpha \label{eq:CrossQC_xixj}
\end{align}
with $W = (e_i^\top E^{-1}e_i)e_je_j^\top \text{ or } (e_j^\top E^{-1} e_j) e_i e_i^\top$.

The above bounding procedure in Lemma~\ref{lemma:CrossP} and Theorem~\ref{theorem:CrossQC} can be extended to a cross-product of general quadratic functions with similar structures. However, more specific conditions and the resulting QCs require dedicated study for the particular quadratic functions of interest.

% \subsection{Computational Scaling Issues}
% \commentr{
% \begin{itemize}
%     \item size of SDP problem
%     \item number of constraint for each class of QC
%     \item choose of monomial basis
% \end{itemize}
% }

\section{Numerical Examples}

% \commentg{
% \begin{itemize}
%     \item ROA numerical example
%     \begin{itemize}
%         \item implementations
%         \item algorithm to solve
%     \end{itemize}
%     % 
%     \item 2-state example to show effectiveness of Valley QC
%     \begin{itemize}
%         \item describe model
%         \item compare [CSQC] / [CSQC + Valley QC]
%     \end{itemize}
%     %
%     \item 3-state example to show effectiveness of rank-3 and Cross-product
%     \begin{itemize}
%         \item describe model
%         \item compare [CSQC] / [CSQC + Rank-3]
%         \item compare [CSQC] / [CSQC + Cross-product]
%     \end{itemize}
% \end{itemize}
% }

The proposed QCs are illustrated via an ROA estimation problem. The analysis algorithm is adopted from~\cite{kalur2021LCSS} with simplification detailed in the following paragraph. Note that the proposed QC can also be incorporated into any algorithm utilizes QC, e.g.~\cite{toso2022regional}. The intention of this section is to compare the effectiveness of newly introduced QCs to the existing QCs without involing advanced algorithms for this particular analysis. Hence, comparison against the full algorithms in~\cite{kalur2021LCSS} and~\cite{toso2022regional} is not provided. 

The largest ROA estimation is obtained by maximizing $r$ over $P,r,\xi$ subject to constraints~\eqref{eq:ROA} in Theorem~\ref{theorem:ROA}. This optimization problem is an SDP for given $E$ and $\alpha$ and hence the optimal $r^*$ can be solved efficiently. The largest $r^*$ is computed over a grid of $\alpha$ with a given shape local region $E$. In this paper, $E=I$ is fixed in all comparisons. The results can be improved by iteratively updating $E$ as in~\cite{kalur2021LCSS}.

The existing CSQC~\eqref{eq:CSQC} from literature~\cite{kalur2021LCSS} serves as the baseline analysis. It is compared against results that incorporate the proposed Rank-2 Valley QCs~\eqref{eq:ValleyQC}, Rank-3 Valley QCs~\eqref{eq:Rank3ValleyQC_phi}, and Cross-Product QCs~\eqref{eq:CrossQC}. 

A 2-state system and a 3-state system are investigated. Both examples were implemented in MATLAB with CVX~\cite{cvx} and the SDP solver MOSEK~\cite{mosek}. 
The implementation is made available online\footnote{
Source code is available at \url{https://github.com/SCLiao47/ValleyQC\_ROA} titled \textit{ValleyQC\_ROA} on \textit{GitHub.com}.}. Note that the effectiveness of proposed work depends on the specific dynamics. For example, the four-state shear flow problem discussed in~\cite{Liu2020io-inspired,kalur2021LCSS,toso2022regional} is not included as the new QCs provide only small improvement on this example. 

\subsection{2-state Example}

Consider the quadratic nonlinear system~\cite{amato2006region}:
\begin{align}   \label{Example:2State}
    \frac{d}{dt}\bmtx x_1 \\ x_2 \emtx = \bmtx -50 & -16 \\ 13 & -9 \emtx \bmtx x_1 \\ x_2 \emtx + \bmtx 13.8 \\ 5.5 \emtx x_1x_2.
\end{align}
The system has one stable equilibrium at the origin. By simulating trajectories, the phase portrait (Fig.~\ref{fig:example_2State}) indicates the largest spherical ROA has a radius about $r^*\approx4.95$ with the unstable region at the upper-right.

\begin{figure}
    \centering
    \vspace*{0.2cm}
    \includegraphics[width=\linewidth]{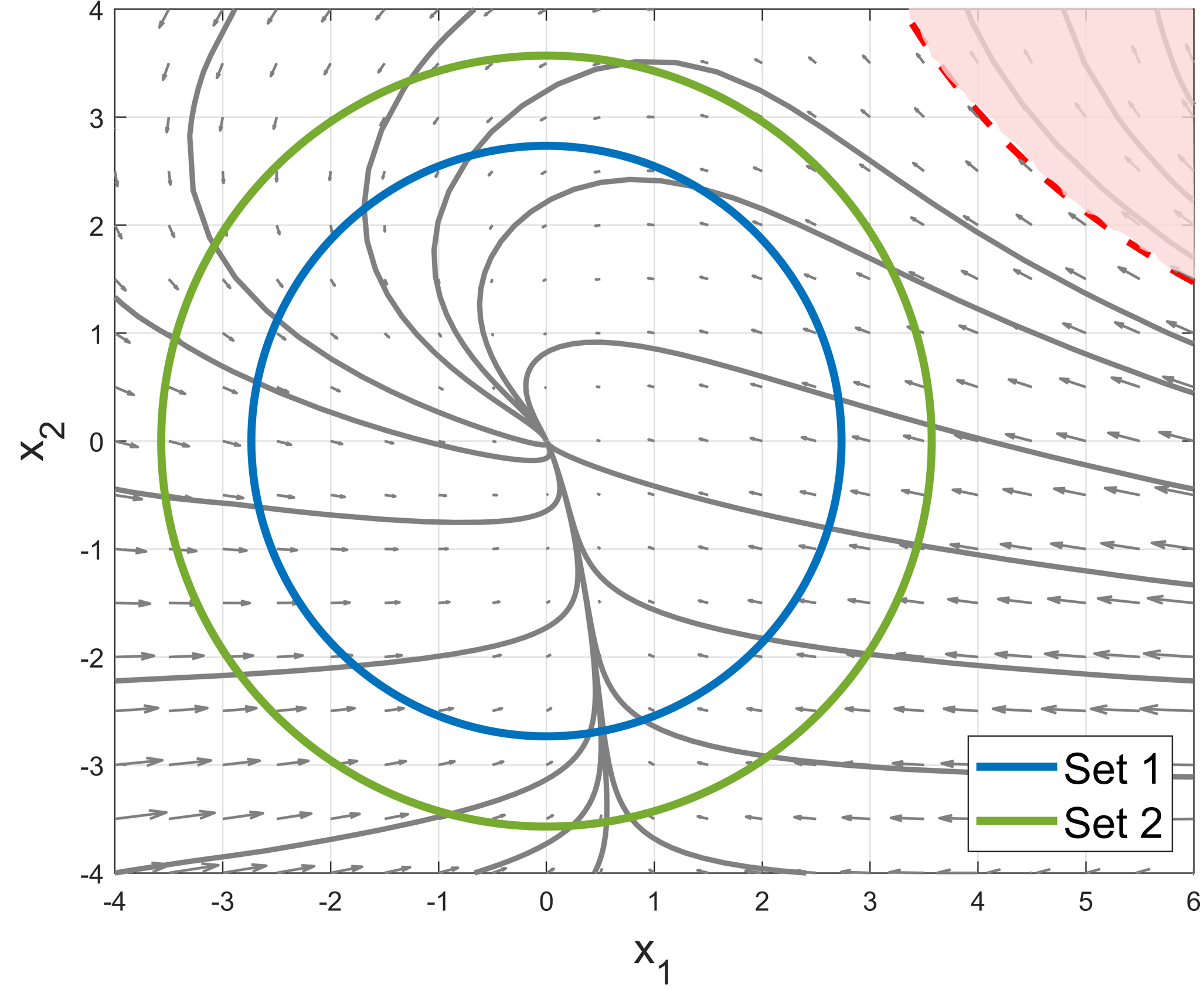}
    \caption{Phase portrait and ROA analysis results for the 2-state system in~\eqref{Example:2State}. The upper-right region shaded in red is an unstable region. The two circles are the ROA estimates given by each QC set. Set 1 applies CSQC~\eqref{eq:CSQC}, and Set 2 applies CSQC~\eqref{eq:CSQC} and Rank-2 Valley QCs~\eqref{eq:ValleyQC}.}
    \label{fig:example_2State}
\end{figure}

The nonlinearity $x_1x_2$ in system~\eqref{Example:2State} can be bounded by both the CSQC~\eqref{eq:CSQC} and Rank-2 Valley QCs~\eqref{eq:ValleyQC}. The ROA estimation is performed with two sets of QCs. Set 1 applies CSQC~\eqref{eq:CSQC} on the nonlinearity $x_1x_2$. Set 2 applies both CSQC~\eqref{eq:CSQC} and Rank-2 Valley QCs~\eqref{eq:ValleyQC} on $x_1x_2$.

% \renewcommand{\labelenumii}{Set \arabic{enumii}:}
% \begin{enumerate}
% \begin{enumerate}
%     \item QC~\eqref{eq:CSQC}
%     \item QC~\eqref{eq:CSQC} and QCs~\eqref{eq:ValleyQC}
% \end{enumerate}
% \end{enumerate}
The QC analysis results are visualized in Fig.~\ref{fig:example_2State}. 
Set~2 ($r_2^*=3.5224$) gives a less conservative estimate than Set~1 ($r_1^*=2.7355$) by incorporating the Rank-2 Valley QCs~\eqref{eq:ValleyQC}. The results illustrate the effectiveness of Rank-2 Valley QCs~\eqref{eq:ValleyQC} on the quadratic nonlinearity in~\eqref{Example:2State}.

% \commentr{
% Note that the gap between $r^*$ and QC analysis result could be due to various reasons. One possible source of conservatism in the analysis is the class of Lyapunov function we search for, which is a quadratic function. A more complicated structure of Lyapunov function could potentially improve the analysis. However, at the cost of more complex analysis approach and computational expense, such as SOS. 
% }

% ==== 3-state example ====
% The model and result are in ExpList(4) from file "Exps8_20220321_2235_EECS.mat" 

\subsection{3-state Example}
Consider the 3-state system $\dot{x} = Ax + Bw$:
\begin{align}   \label{Example:3state}
    \begin{split}
        A &= \bsmtx -1 & -1 & -1 \\ -1 & -6 & -1 \\ -1 & -1 & -13\esmtx, \quad
        B = \bsmtx 0 & 1 & 0 & 0 & 0 & 1 \\ 0 & 0 & -6 & 0 & -4 & 0 \\ 0 & 0 & 0 & 0 & 2 & -1 \esmtx, \\
        x &= \bsmtx x_1 & x_2 & x_3 \esmtx^\top, \quad 
        w = \bsmtx x_1^2 & x_1x_2 & x_1x_3 & x_2^2 & x_2x_3 & x_3^2 \esmtx^\top.    
    \end{split}
\end{align}
\commentg{By numerically solving trajectories, the spherical ROA estimate of the system has an upper bound $r^*\approx2.4283$, where there exists an initial condition not converging to $x_e = 0$.} The largest spherical ROA estimate has a radius smaller than $\bar{r}$.

The analysis is performed with different sets of QCs. The CSQC~\eqref{eq:CSQC} is applied to each monomial and $b_i^\top w$ for $i=1,2,3$, where $b_i^\top$ is the $i$-th row vector of the matrix $B$. The Rank-2 Valley QCs~\eqref{eq:ValleyQC} are applied to each sign-indefinite monomial $w_i$, $b_2^\top w$ and $b_3^\top w$. The Rank-3 Valley QCs~\eqref{eq:Rank3ValleyQC_phi} are applied on $b_1^\top w$. The Cross-Product QCs~\eqref{eq:CrossQC} are applied on each pair of monomials satisfying the conditions.

TABLE~\ref{tab:example_3state} summarizes the setting of analysis and the results for eight sets of QCs. Each of the Set 2, 3, and~4 gives a less conservative result than Set 1. The results indicate that each of the proposed QCs improved the analysis individually. Furthermore, Set 5, 6, and 7 show that the analysis result could be improved by including multiple proposed QC into the analysis. Lastly, Set 5 and Set 8 give the least conservative estimation among all sets. The two analysis are the same up to the numerical tolerance of the solver. While this might imply adding Cross-Product QCs does not improve the analysis, this could be because of the specific system~\eqref{Example:3state} and stability condition used. Another system or stability condition could have different results. 

% https://www.tablesgenerator.com/latex_tables
\begin{table}[h!]
\centering
\caption{ROA estimation results $r^*$ of 3-state system in~\eqref{Example:3state} by QC sets.}
\begin{tabular}{r|cccccc}
% \hline
\textbf{Set \#} & CSQC          & Rank-2        & Rank-3        & Cross-Product & QC \# & $r^*$  \\ \hline
\textbf{Set 1}  & \checkmark    &               &               &               & 9     & 0.7173  \\ %\hline
\textbf{Set 2}  & \checkmark    & \checkmark    &               &               & 19    & 1.2041 \\ %\hline
\textbf{Set 3}  & \checkmark    &               & \checkmark    &               & 13    & 0.8487 \\ %\hline
\textbf{Set 4}  & \checkmark    &               &               & \checkmark    & 63    & 0.7900 \\ %\hline
\textbf{Set 5}  & \checkmark    & \checkmark    & \checkmark    &               & 23    & 1.3365 \\ %\hline
\textbf{Set 6}  & \checkmark    & \checkmark    &               & \checkmark    & 73    & 1.2468 \\ %\hline
\textbf{Set 7}  & \checkmark    &               & \checkmark    & \checkmark    & 67    & 0.8846 \\ %\hline
\textbf{Set 8}  & \checkmark    & \checkmark    & \checkmark    & \checkmark    & 77    & 1.3365 \\ %\hline
\end{tabular}
\label{tab:example_3state}
\end{table}

\section{Conclusions}

% \commentg{
% \begin{itemize}
%     \item summarize
%     %
%     \item impact of these new QC
%     \begin{itemize}
%         \item can be used for other IQC analysis
%         \item enable analysis of more complex system (e.g., quadratic system w/ saturation)
%     \end{itemize}
%     %
%     \item future extensions
%     \begin{itemize}
%         \item generalize idea of valley QC to higher rank 
%         \item QC on higher order polynomial nonlinearity
%     \end{itemize}
% \end{itemize}
% }

In this work, we proposed new quadratic constraints to reduce conservatism in the analysis of quadratic systems using dissipation inequalities. The proposed QCs exploit the property of sign-indefinite quadratic polynomials to tighten the bound along with the QC previously derived in~\cite{kalur2021LCSS}. The effectiveness of the proposed QCs is illustrated by successfully enlarging ROA estimations in two numerical examples. Future work includes applying the QCs to other system analysis problems and investigating the computational scalability of the proposed method.

\appendices

\section{Rank 2 Sign-indefinite $Q$}    \label{apdx:Rank2}

This appendix shows that a matrix $Q=Q^\top\in\Ri{n\times n}$ is rank 2 with one positive and one negative eigenvalue if and only if exists nonzero, linearly independent vectors $c_1, c_2 \in\Rn$ such that $Q=\frac{1}{2}(c_1c_2^\top+c_2c_1^\top)$ with.

Let $(\lambda_p, v_p)$ and $(\lambda_n, v_n)$ be the eigenpairs of $Q$ associated with the positive and negative eigenvalues. $Q$ has the eigendecomposition $Q=\lambda_pv_pv_p^\top + \lambda_nv_nv_n^\top$. For any $x\in\Rn$, the following is true:
\begin{align*}
    x^\top Q x &= (\sqrt{\lambda_p} v_p^\top x)^2 - (\sqrt{|\lambda_n|}v_n^\top x)^2 \\
    &= (c_1^\top x)(c_2^\top x) = \frac{1}{2}x^\top (c_1c_2^\top+c_2c_1^\top) x
\end{align*} 
with $c_1 = \sqrt{\lambda_p}v_p + \sqrt{|\lambda_n|}v_n$ and $c_2 = \sqrt{\lambda_p}v_p - \sqrt{|\lambda_n|}v_n$. Note that $v_p, v_n$ is orthogonal to each other since they are eigenvectors of the real symmetric matrix $Q$. Hence, $c_1, c_2$ are nonzero, linearly independent vectors as $\lambda_p,\lambda_n\neq 0$.

Conversely, let $c_1,c_2\in\Rn$ be any nonzero, linearly independent vectors. Define $[c_1, c_2] = UR$ be the QR factorization, where $U\in\Ri{n\times n}$ is an orthogonal matrix and $R\in\Ri{n\times2}$. Define $Q = \frac{1}{2}(c_1c_2^\top+c_2c_1^\top)$. 
\commentg{One can verify that
\begin{align*}
    U^\top Q U = 
    \left[\begin{array}{c|c}
        \begin{smallmatrix}
         a & b \\ b & 0
        \end{smallmatrix} &  0 \\ \hline
        0 & 0
    \end{array}\right]
    % \bsmtx a & b & 0 \\ b & 0 & 0 \\ 0 & 0 & 0 \esmtx
\end{align*}
has a nonzero block with some $a\in\Ri{}$ and $b\neq 0$.} The eigenvalues of $Q$ are $\frac{a\pm\sqrt{a^2+4b^2}}{2}$ and $(n-2)$ repeated eigenvalues at $0$. Note that $\sqrt{a^2+4b^2} > a$. Hence, $Q$ is rank 2 with one positive and one negative eigenvalue.

\section{Rank 3 Sign-indefinite $Q$}    \label{apdx:Rank3}

This appendix shows that a matrix $Q=Q^\top\in\Ri{n\times n}$ is rank 3 with two positive and one negative eigenvalue if and only if exists nonzero, linearly independent vectors $c_1, c_2, c_3 \in\Rn$ such that $Q=\frac{1}{2}(c_1c_2^\top+c_2c_1^\top)+c_3c_3^\top$ with $c_3$ being orthogonal to $c_1,c_2$.

Let $(\lambda_i, v_i)$ be the eigenpairs of $Q$ for $i=1,2,3$, where $\lambda_1,\lambda_2$ are positive and $\lambda_3$ is negative. For any $x\in\Rn$, the following is true from the eigendecomposition of $Q$:
\begin{align*}
    x^\top Q x =  (\sqrt{\lambda_1} v_1^\top x)^2 + (\sqrt{\lambda_2}v_2^\top x)^2 - (\sqrt{|\lambda_3|} v_3^\top x)^2.
\end{align*}
Hence, $Q$ can be written as $\frac{1}{2}(c_1c_2^\top+c_2c_1^\top)+c_3c_3^\top$ with vectors $c_1 = \sqrt{\lambda_1}v_1 + \sqrt{|\lambda_3|}v_3, c_2 = \sqrt{\lambda_1}v_1 - \sqrt{|\lambda_3|}v_3$ and $c_3 = \sqrt{\lambda_2}v_2$. Note that $c_1,c_2,c_3$ are nonzero, linearly independent due to the eigenvectors of real symmetric matrix $Q$ being orthogonal. Also, $c_3$ is orthogonal to $c_1,c_2$. 

Conversely, let $c_1,c_2,c_3$ be nonzero, linearly independent vectors with $c_3$ being orthogonal to $c_1,c_2$. Observe that $\norm{c_3}>0$ is a positive eigenvalue of $Q=\frac{1}{2}(c_1c_2^\top+c_2c_1^\top)+c_3c_3^\top$ with associated eigenvector $c_3$. It follows from Appendix~\ref{apdx:Rank2} that the space spanned by $(c_1,c_2)$ contains one positive and one negative eigenvalue. Hence, $Q$ is rank 3 with two positive and one negative eigenvalue. 

\addtolength{\textheight}{-12cm}   % This command serves to balance the column lengths
                                  % on the last page of the document manually. It shortens
                                  % the textheight of the last page by a suitable amount.
                                  % This command does not take effect until the next page
                                  % so it should come on the page before the last. Make
                                  % sure that you do not shorten the textheight too much.

%%%%%%%%%%%%%%%%%%%%%%%%%%%%%%%%%%%%%%%%%%%%%%%%%%%%%%%%%%%%%%%%%%%%%%%%%%%%%%%%
%%%%%%%%%%%%%%%%%%%%%%%%%%%%%%%%%%%%%%%%%%%%%%%%%%%%%%%%%%%%%%%%%%%%%%%%%%%%%%%%

\section*{ACKNOWLEDGMENT}
% This work is supported by the Army Research Office under grant number W911NF-20-1-0156. Maziar S. Hemati acknowledges support from the Air Force Office of Scientific Research under award number FA9550-21-1-0434 and the National Science Foundation under award number CBET-1943988.
The authors would like to thank Talha Mushtaq and Diganta Bhattacharjee for valuable discussion.

%%%%%%%%%%%%%%%%%%%%%%%%%%%%%%%%%%%%%%%%%%%%%%%%%%%%%%%%%%%%%%%%%%%%%%%%%%%%%%%%

% References are important to the reader; therefore, each citation must be complete and correct. If at all possible, references should be commonly available publications. 
\bibliographystyle{ieeetr}
\bibliography{Reference}

% \section*{APPENDIX}

\end{document}